\DeclareMathOperator{\tube}{Tube}
\def\R{\mathbb{R}}
\newcommand{\beq}{\begin{eqnarray}}
\newcommand{\eeq}{\end{eqnarray}}
\newcommand{\beqq}{\begin{eqnarray*}}
\newcommand{\eeqq}{\end{eqnarray*}}
\def\cM{\mathcal{M}}
\def\cP{\mathcal{P}}
\def\cX{\mathcal{X}}
\def\cY{\mathcal{Y}}
\def\e{\varepsilon}
\newcommand{\E}{\mathbb{E}} 
\newcommand{\mean}[1] {\E\left\{{#1}\right\}}
\newcommand{\meanx}[1] {\E\{{#1}\}}
\newcommand{\ind}{\boldsymbol{\mathbbm{1}}} 
\newcommand{\iprod}[1]{\left\langle{#1}\right\rangle}
\newcommand{\set}[1]{\left\{#1\right\}}
\newcommand{\norm}[1]{\left\|#1\right\|}
\newcommand{\param}[1]{\left(#1\right)}
\newcommand{\abs}[1] {\left| {#1}\right|}
\newcommand{\prob}[1]{\mathbb{P}\left(#1\right)}
\newcommand{\probx}[1]{\mathbb{P}(#1)}
\newcommand{\tC}{\tilde{C}}
\newcommand{\eps}{\epsilon}
\newcommand{\by}{\mathbf{y}}
\newcommand{\bx}{\mathbf{x}}
\newcommand{\eone}{\mathrm{e}_1}
\newcommand{\cQ}{{\cal{Q}}}
\newtheorem{lem}{Lemma}[section]
\newtheorem{lemma}[lem]{Lemma}
\newtheorem{theorem}[lem]{Theorem}
\newtheorem{corollary}[lem]{Corollary}
\theoremstyle{definition}
\newcommand{\cech}{\v{C}ech }
\newcommand{\iid}{\mathrm{i.i.d.}}
\def\Sk{S_{k,n}}
\def\Lk{L_{k,n}}
\def\bk{\beta_{k,n}}
\def\Skt{\widehat{S}_{k,n}}
\newcommand{\ninf}{n\to\infty}
\newcommand{\limninf}{\lim_{\ninf}}
\def\CC{\check{C}}
\numberwithin{equation}{section}
\def\teq{\triangleq}
\def\Rnc{R_n^{\mathrm{c}}}
\def\cpl{c_{\mathrm{p}}}
\def\cexp{c_{\mathrm{e}}}
\def\cnorm{c_{\mathrm{g}}}
\def\Brn{B_{R_n}}
\def\dpl{\delta_{\mathrm{p}}}
\def\dexp{\delta_{\mathrm{e}}}
\def\dnorm{\delta_{\mathrm{g}}}
\def\fpl{f_{\mathrm{p}}}
\def\fexp{f_{\mathrm{e}}}
\def\fnorm{f_{\mathrm{g}}}
\def\mpl{\mu_{\mathrm{p},k}}
\def\mplo{\mu_{\mathrm{p},0}}
\def\mplh{\hat{\mu}_{\mathrm{p},k}}
\def\mexp{\mu_{\mathrm{e},k}}
\def\mexpo{\mu_{\mathrm{e},0}}
\def\mexph{\hat{\mu}_{\mathrm{e},k}}
\def\Rk{R_{k,n}}
\def\Rke{\Rk^\eps}
\def\Ro{R_{0,n}}
\def\Roe{\Ro^\eps}
\def\So{S_{0,n}}
\def\Sot{\hat{S}_{0,n}}
\def\cxr{\cX_{n,R_n}}
\begin{document}

\begin{frontmatter}
\title{Crackle: The {Persistent}  Homology of Noise}


\begin{aug}
\author{\fnms{Robert J.} \snm{Adler}
\ead[label=e1]{robert@ee.technion.ac.il}
\ead[label=u1,url]{http://webee.technion.ac.il/people/adler}}
\author{\fnms{Omer}
  \snm{Bobrowski}
  \ead[label=e2]{omer@math.duke.edu}
\ead[label=u2,url]{http://www.math.duke.edu/$\sim$omer} }
\and
\author{\fnms{Shmuel} \snm{Weinberger}
\ead[label=e5]{shmuel@math.uchicago.edu}
\ead[label=u5,url]{http://www.math.uchicago.edu/$\sim$shmuel}}


\runauthor{Adler, Bobrowski, Weinberger}
\affiliation{Electrical Engineering, Technion -- Israel Institute of Technology\\
Department of Mathematics, Duke University\\
Department of Mathematics, University of Chicago}
\end{aug}


\begin{abstract}
We study the homology of simplicial complexes built via deterministic rules from a  random set of vertices. In particular,  we show that, depending on the randomness that generates the vertices, the homology of these complexes can either become trivial as the number $n$ of vertices grows, or can contain more and more complex structures.
The different behaviours are consequences of different underlying distributions for the generation of vertices, and we consider three illustrative  examples, when the vertices are sampled from Gaussian, exponential, and power-law distributions in $\R^d$.

We also discuss consequences of our results for  manifold learning with noisy data,  describing  the topological phenomena that arise in this scenario as `crackle', in analogy to audio crackle in temporal signal analysis.

\end{abstract}


\begin{keyword}[class=AMS]
\kwd[Primary ]{60D05, 60F15, 60G55; }
\kwd[Secondary ]{55U10.}
\end{keyword}

\begin{keyword}
\kwd{\cech complex,  random complexes, persistent homology, random  Betti numbers.}
\end{keyword}

\end{frontmatter}



\section{Introduction} \label{sec:crackle:intro}

This paper treats the homology of simplicial complexes built via deterministic rules from a  random set of vertices. In particular, it shows that, depending on the randomness that generates the vertices, the homology of these complexes can either become trivial as the sample size grows, or can contain more and more complex structures.

The motivation for these results comes from applications of topological tools for pattern analysis, object identification, and especially for the analysis of data sets.  Typically, one starts with a collection of points and forms some simplicial complexes associated to these, and then takes their homology.  For example, the $0$-dimensional homology of such complexes can be interpreted as a version of clustering.  The basic philosophy behind this attempt is that topology has an essentially qualitative nature and should therefore be robust with respect to small perturbations.
Some recent references are \cite{aronshtam_vanishing_2010,babson_fundamental_2011,cohen_homotopical_2010,meshulam_homological_2009,pippenger_topological_2006} with two reviews, from different aspects, in \cite{adler_persistent_2010} and \cite{ghrist_barcodes:_2008}. Many of these papers find their {\it raison d'\^etre} in essentially
statistical problems, in which data generates the structures.

An important example occurs in the following manifold learning problem. Let $\cM$ be an unknown manifold embedded in a Euclidean space, and suppose that we are a given a set of independent and identically distributed ($\iid$) random samples $\cX_n = \set{X_1,\ldots,X_n}$ from the manifold. In order to recover the homology of $\cM$, we consider  the homology of
\beq
\label{U:equn}
U \ =\  \bigcup_{k=1}^n B_{\eps}(X_k),
\eeq
where $B_\eps(X)$ is the Euclidean ball, in the ambient space,   of radius $\eps$ about the point $X$. The belief, or hope,  is that, for large enough $n$, the homology of $U$ will be equivalent to that of $\cM$. A confounding issue arises
when the sample points do not necessarily lie on the manifold, but rather are perturbed from it by a random amount.
When this happens, it will follow from our results that the precise distribution behind the randomness plays a qualitatively important role. It is known that if the perturbations come from a bounded or strongly concentrated distribution, then they do not lead to much spurious homology, and the above line of attack, appropriately applied, works. For example,
it was shown in \cite{niyogi_topological_2011}  that for Gaussian noise it is  possible to clean the data and recover the underlying topology of $\cM$ in a way that is essentially independent on the ambient dimension.
Both \cite{niyogi_finding_2008,niyogi_topological_2011} contain results of the form that, given  a nice enough $\cM$, and any $\delta>0$,  there are explicit conditions on $n$ and $\eps$ such that the homology of $U$ is equal to the homology of $\cM$ with a probability of at least $(1-\delta)$.  However, for  other distributions no such results exist, nor, in view of the results of this paper,  are they to be expected.

Figure \ref{fig:mfld} provides an illustrative example of what happens when sampling points from an
annulus and perturbing them with additional noise before reconstructing the annulus as in  \eqref{U:equn}.
In particular, it shows that if the additional noise is in some sense large then  sample points can appear basically anywhere, introducing extraneous homology elements.

\begin{figure}[hc!]
\label{fig:mfld}
\centering
\subfigure[]
{
  \centering
    {\includegraphics[scale=0.2]{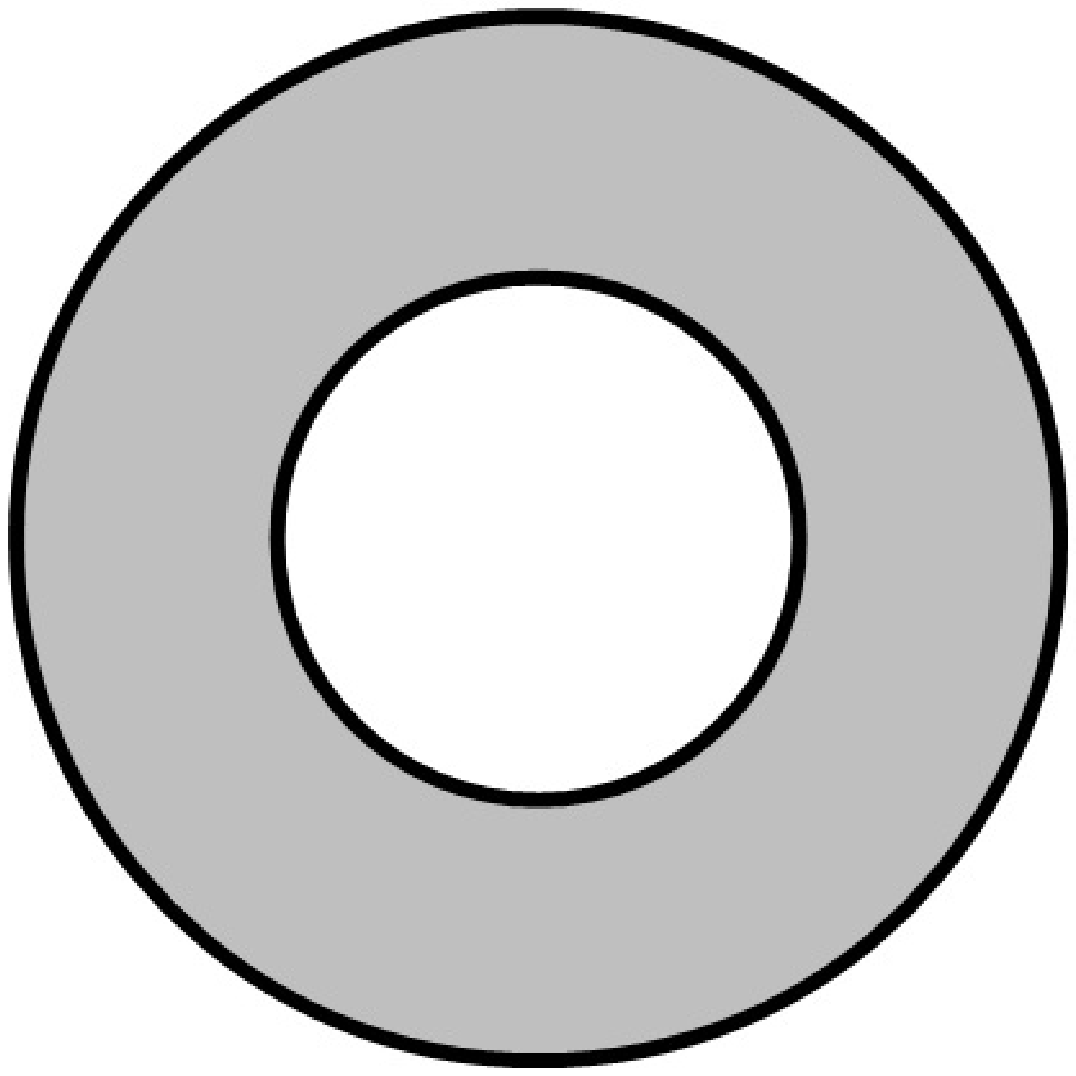}}
    \label{fig:mfld_orig}
}
\subfigure[]
{
  \centering
    {\includegraphics[scale=0.2]{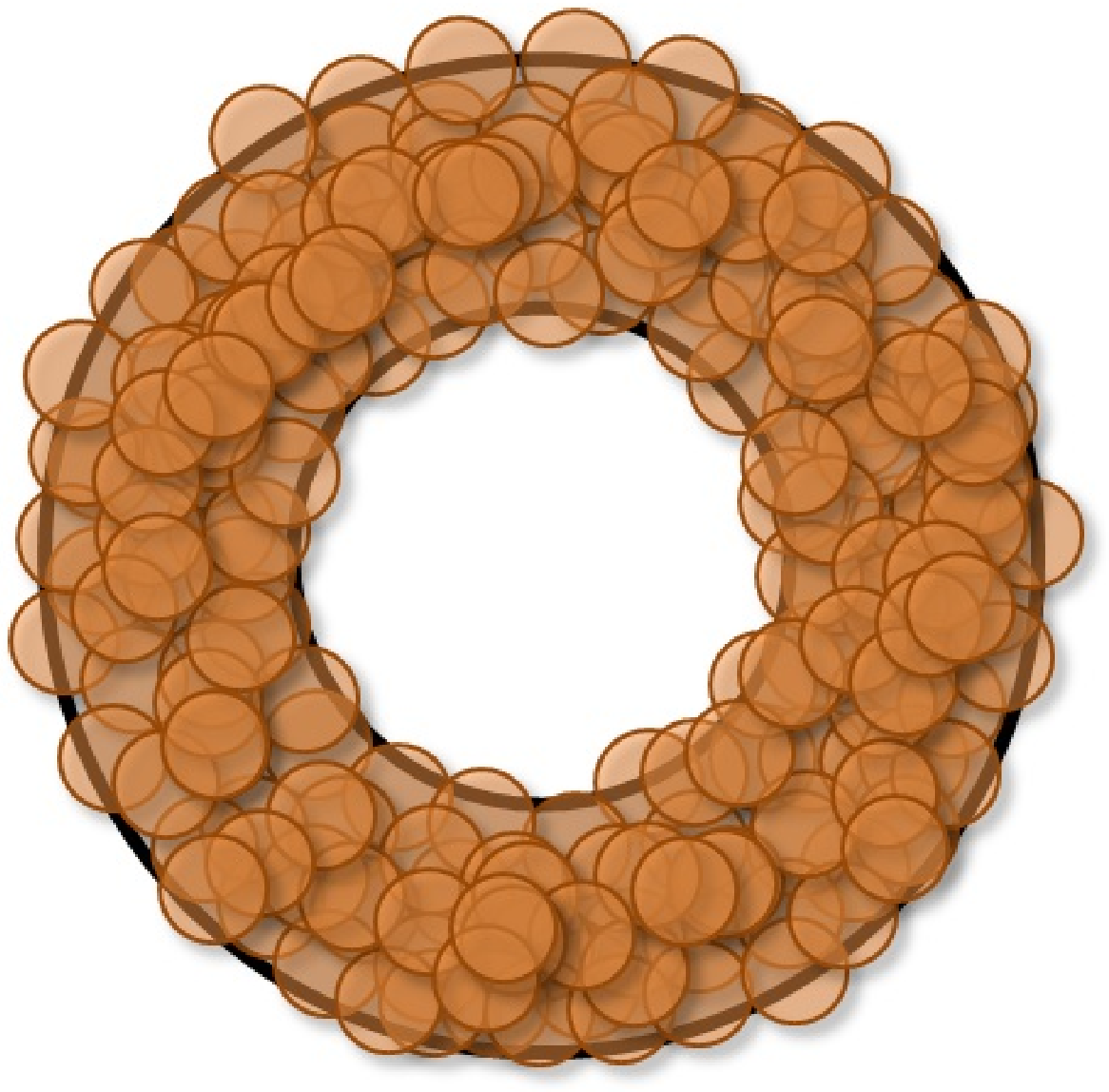}}
    \label{fig:mfld_no_noise}
}
\subfigure[]
{
  \centering
    {\includegraphics[scale=0.2]{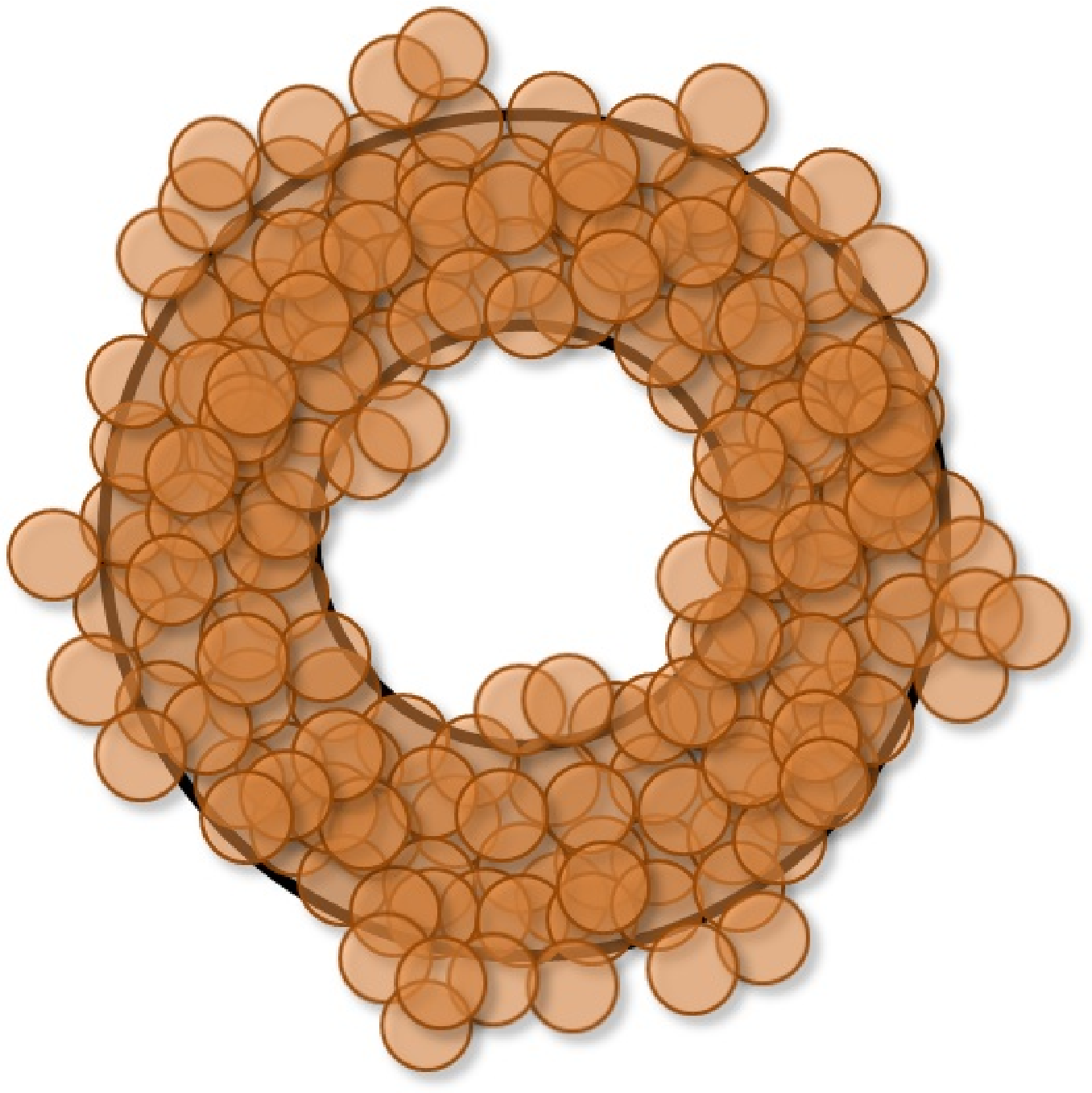}}
    \label{fig:mfld_noise_low}
}
\subfigure[]
{
  \centering
    {\includegraphics[scale=0.2]{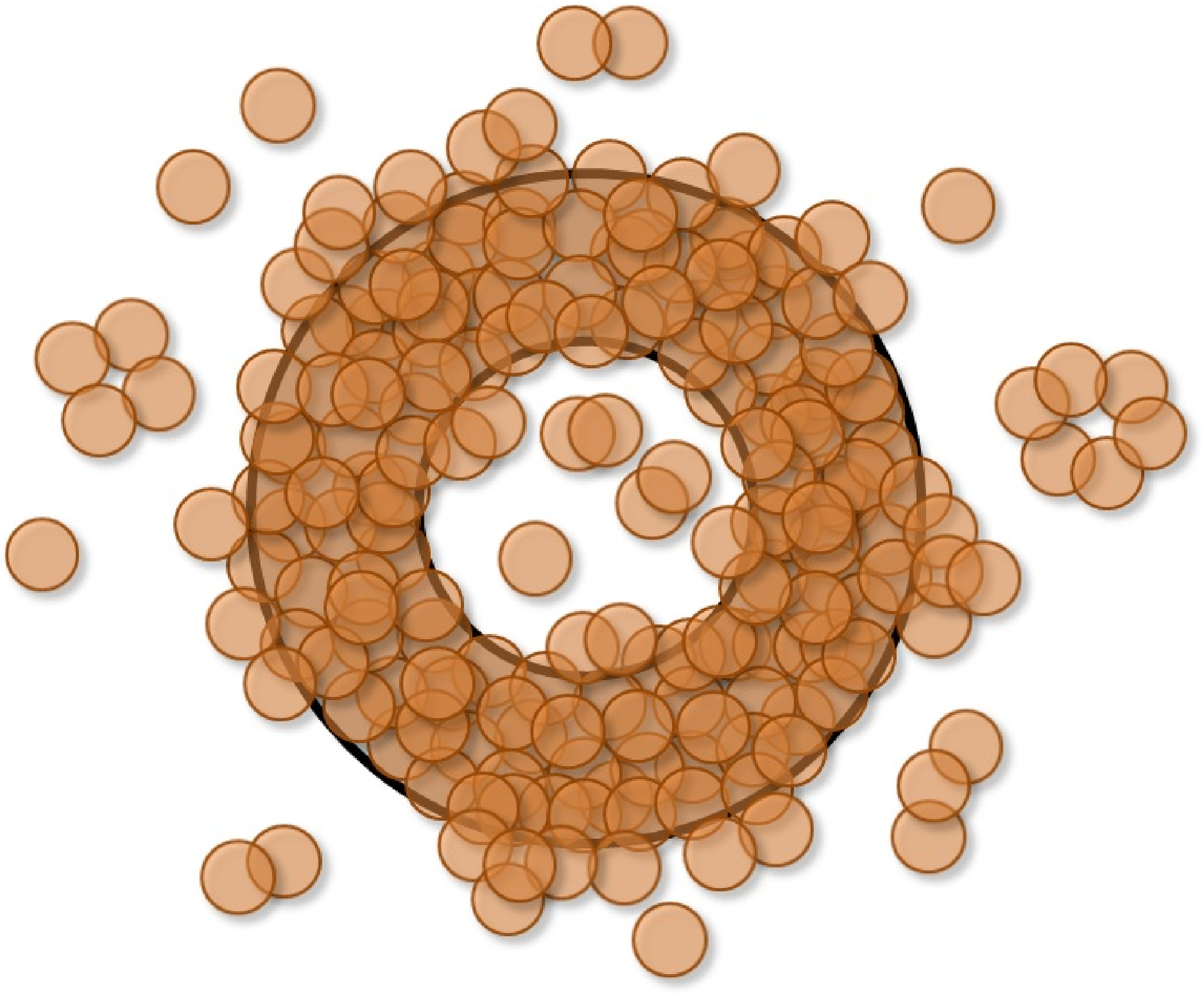}}
    \label{fig:mfld_noise_high}
}

\caption{(a) The original space  $\cM$ (an annulus) that we wish to recover from random samples. (b) With the appropriate choice of radius, we can easily recover the homology of the original space from random samples from $\cM$. (c) In the presence of bounded noise, homology recovery is undamaged. (d) In the presence of unbounded noise, many extraneous homology elements appear, and significantly interfere with homology recovery.}
\label{fig:complexes:simp_comp}
\end{figure}

In order to  be able, eventually, to extend the work in \cite{niyogi_topological_2011} beyond Gaussian noise, and make more concrete statements about the probabilistic features of the homology this extension generates, it is necessary to first focus on the behaviour of samples generated by pure noise, with no underlying manifold. In this case, thinking of the above setup,  the manifold $\cM$ is simply the point at the origin, and the homology that we shall be trying to recapture is trivial. Nevertheless, we shall see that differing noise models can make this task extremely delicate, regardless of sample size.

\subsection{Some sample results}
To start being more concrete, let $$\cX_n = \set{X_1,\ldots,X_n}$$ be a set of $n$ $\iid$ random samples in $\R^d$, from a common density function $f$. Recall that the
abstract simplicial complex  $\CC(\cX,\e)$  constructed according to the following rules is called the \cech complex associated to $\cX$ and $\e$:
\begin{enumerate}
\item The $0$-simplices  of  $\CC(\cX,\e)$   are the points in $\cX$,
\item An $n$-simplex $\sigma=[x_{i_0},\ldots,x_{i_n}]$ is in $\CC(\cX,\e)$ if $\bigcap_{k=0}^{n}
B_{x_{i_k}}(\e) \ne  \emptyset$,
\end{enumerate}
An important result, known as the `nerve theorem', links \cech complexes
 and the neighborhood set $U$ of \eqref{U:equn},  establishing that they are  homotopy equivalent  (cf.\ \cite{borsuk_imbedding_1948}).  In particular,  they have the same  Betti numbers, measures of homology that we shall concentrate on in what follows.

 If the sample distribution has a compact support $S$, then it is easy to show that,   for large enough $n$,
 \beqq
\CC(\cX,\e) \ \simeq \ \bigcup_{k=1}^n B_\e(X_k) \ \approx\  \tube(S,\e)  \ \teq \ \big\{x\in \R^d: \min_{y\in S} \|x-y\|\,\leq \, \e \big\},
\eeqq
where $\simeq$ denotes homotopy equivalence and $\norm{\cdot}$ is the standard $L^2$ norm in $\R^d$. Thus, there is not much to study in this case. However, when the support of the distribution is unbounded, interesting phenomena occur.

To study these phenomena, we shall consider three representative examples of probability densities. These are the  \emph{power-law}, \emph{exponential}, and the \emph{standard Gaussian} distributions,
whose density functions are given, respectively, by
\begin{align}
\label{eq:crackle:dist_pl}
\fpl(x) &\teq \frac{\cpl}{1+\norm{x}^\alpha},\\
\label{eq:crackle:dist_exp}
\fexp(x) &\teq \cexp e^{-\norm{x}},\\
\label{eq:crackle:dist_norm}
\fnorm(x) &\teq \cnorm e^{-\norm{x}^2/2},
\end{align}
where $\alpha > d$ and $\cpl,\cexp,\cnorm$ are  appropriate normalization constants that will not be of concern to us.

For large samples from any of these distributions we shall show that  there exists a `core' - a region in which the density of points is very high and so
placing unit balls around them completely covers the region. Consequently, the \cech complex inside the core is contractible. The size of the core obviously grows to infinity as the sample size $n$ goes to infinity, but its exact size will depend on the underlying distribution. For the three examples above, if we
denote the radius of the core by $\Rnc$, we shall prove in Section \ref{sec:crackle:core} that
\[
\Rnc \sim \begin{cases} (n/\log n)^{1/\alpha} & f(x) \propto \frac{1}{1+\norm{x}^\alpha} ,\\
\log n & f(x) \propto e^{-\norm{x}}, \\
\sqrt{2\log n}  & f(x) \propto e^{-\norm{x}^2/2}.
 \end{cases}
\]
Note that in all three cases we have tacitly assumed that the  cores are balls, a natural consequence of the spherical symmetry of the probability densities.

Beyond the core, the topology is more varied.
For fixed $n$,  there may be additional isolated components, but no longer enough placed densely  enough to connect with one another and to form  a contractible set. Indeed, we shall show that the individual components will typically have enough homology to be, individually, non-contractible.
Thus, in this region, the topology of the \cech complex is highly nontrivial, and many homology elements of different orders appear.
We call this phenomenon `crackling', akin to the well known phenomenon caused by noise interference in audio signals and commonly referred to as crackling.

As for core size, the exact crackling behaviour depends on the choice of distribution.  It turns out that  Gaussian samples do not lead to crackling, but the other two cases do. To describe this, with some imprecision of notation we shall write $[a,b)$ not only for an interval on the real line, but also for the annulus
 \beqq
 [a,b) \ \teq \  \big\{ x\in \R^d: a\leq \|x\| < b\big\}.
 \eeqq
 In Sections \ref{sec:crackle:pl} and \ref{sec:crackle:exp} we  shall show that the exterior of the core can be divided into disjoint spherical annuli at radii
$$R_n^c  \ll  R_{d-1,n} \ll R_{d-2,n} \ll \cdots \ll R_{0,n}$$
 (defined differently for each of the two crackling distributions) with different types of crackling (i.e.\ of homology) dominating in different regions.

 In $[R_{0,n},\infty)$ there are mostly disconnected points, and no structures with nontrivial homology.
 In $[R_{1,n},R_{0,n})$ connectivity is a bit higher, and a finite number of $1$-cycles appear. In $[R_{2,n},R_{1,n})$ we have a finite number of $2$-cycles, while the number of $1$-cycles grows to infinity as $n\to\infty$. In general, in $[R_{k,n},R_{k-1,n})$, as $n\to\infty$ we have a finite number of $k$-cycles, infinitely many $l$-cycles for $l<k$, and no cycles of dimension $l>k$. In other words, the crackle starts with a pure dust at $R_{n,0}$ and as we get closer to the core, higher dimensional homology  gradually appears.  See Figure \ref{fig:crackle:layers} in the following section
for more details.

As we already mentioned, the Gaussian distribution is fundamentally different than the other two, and does not lead to crackling. In Section \ref{sec:crackle:norm} we show that, for the Gaussian distribution, there are hardly any points located outside the core. Thus, as $n\to \infty$, the union of balls around the sample points becomes a giant contractible ball of radius of order $\sqrt{2\log n}$.

It is now possible to understand a little better how the results of this paper relate to  the noisy manifold learning problem discussed above. For example, if the distribution of the noise is Gaussian, our results imply that if the manifold is well behaved, and the sample size is moderate, noise  outliers should not significantly interfere with homology recovery, since Gaussian noise does not introduce artificial homology elements with large samples. However, there is a delicate counterbalance here between `moderate' and `large'. Once the sample size is large,  the core is also large, and the reconstructed manifold will have the topology of
$\cM\oplus B_{O(\sqrt{2\log n})}(0)$, where $\oplus$ is Minkowski addition. As $n$ grows, the core will eventually envelope any compact manifold, and thus the homology of $\cM$ will be hidden by that of the core.

 On the other hand, if the distribution of the noise is power-law or exponential, then noise outliers will typically generate extraneous homology elements that, for almost any sample size, will complicate the estimation of the original manifold.
 Furthermore, increasing the sample size in no way solves this problem.
 Note that this issue is in addition to the fact that increasing the sample size will, as in the Gaussian case,
 create the problem of a large core concealing the  topology
 of $\cM$.

 Thus, from a practical point of view, the message of this paper is that outliers cause problems in manifold estimation when  noise is present, a fact well known to all practitioners who have worked in the area. What is qualitatively new here is a quantification of how this happens, and how it relates to the distribution of the noise. We do not attempt {to solve this problem here, but unfortunately it follows from the results of this paper that algorithms for handling outliers will probably involve knowing at least the tail behaviour of the error distribution, despite the fact that in practical situations one does not generally want to take as known prior knowledge.

 \subsection{On persistence intervals}
 While the above discussion has concentrated on the persistence of noise induced crackle as sample sizes grow, and the regions in $\R^d$ in which different types of homology appear, the proofs below also yield information about the more classical persistence diagrams of topological data analysis (cf.\ \cite{carlsson_topology_2009,edelsbrunner_persistent_2008,edelsbrunner_book,ghrist_barcodes:_2008}).

  For example, in the two cases for which crackle persists -- the power-law and exponential cases  -- estimates of the type appearing in Section \ref{proofs:sec} indicate that, with high probability, there exist extremely long bars in the bar code representation of persistent homology. Up to lower order corrections, preliminary calculations show that  bar lengths for the $k$-th homology can be as large as  $O(n^{\alpha_k})$ for the power-law case, and  $\beta_k (\log \log n)$ for the exponential case, for appropriate $\alpha_k$ and $\beta_k$. More detailed studies of these phenomena will appear in a later publication.


\subsection{Poisson Processes}
\label{poisson:subsec}
Although we have described everything so far in terms of a random sample   $\cX$ of $n$ points taken from a density $f$, there is another way to approach the results of this paper, and that is to replace the points of $\cX$ with the points of a
$d$-dimensional Poisson process $\cP_n$ whose intensity function is given by $\lambda_n = n f$. In this case the number of points is no longer fixed, but has mean $n$.

All the results of this paper stated for $\cX$ hold, without any change, if we replace $\cX$ by $\cP$.

\subsection{Disclaimers}
Before starting the paper in earnest, and so as not to be accused of myopia, we note that the subject of manifold learning is obviously much broader that that described above, and  algorithms for `estimating' an underlying manifold from
a finite sample abound in the statistics and computer science literatures.
Very few of them, however, take an algebraic point of view that we or the literature quoted above take.
Furthermore, we note that other important results about the homology of Rips and \cech complexes for various distributions can be found in the papers \cite{kahle_random_2011,kahle_limit_2010} and  \cite{bobrowski_distance_2011} .
However, the methods and emphases of these papers are rather different.

\section{Results}
\label{sec:results}
In this section we shall present all our main results, along with some  discussion, more technical than that of the Introduction. Recall from Section \ref{poisson:subsec} that although we present all results for the point set $\cX$, they also hold if we replace the points of $\cX$ by the points of an appropriate Poisson process.
All proofs are deferred to Section \ref{proofs:sec}.

\subsection{The Core of Distributions with Unbounded Support} \label{sec:crackle:core}


We start by examining the core of the power-law, exponential and Gaussian distributions. These distributions are spherically symmetric and the samples are concentrated near the origin.
By `core' we refer to a centered ball $\Brn \teq B_{R_n}(0) \subset \R^d$ containing a very large number of points from the sample $\cX_n$, such that
\[
    \Brn \subset \bigcup_{X\in \cX_n \cap \Brn} B_1(X).
\]
i.e.\! the unit balls around the sample points completely cover $\Brn$.
In this case the homology of $\bigcup_{X\in\cX_n\cap \Brn} B_1(X)$, or equivalently, of $\CC(\cX_n \cap \Brn,1)$, is trivial.
Obviously, as $n\to\infty$, the radius $R_n$ grows as well.

Let $\set{R_n}_{n=1}^\infty$ be an increasing sequence of positive numbers.
Define by $C_n$ the event that $\Brn$ is covered, i.e.
\[
    C_n \teq \set{\Brn \subset \bigcup_{X\in \cX_n \cap \Brn} B_1(X)}.
\]
We wish to find the largest possible value of $R_n$ such that
$\prob{C_n} \to 1$.
The following theorem presents lower bounds for this value.


\begin{theorem}\label{thm:crackle:core} Let $\eps >0$, and define
\[
\Rnc \triangleq \begin{cases} \param{\frac{\dpl n}{\param{\log n - e^{-\eps} \log \log n}}-1}^{1/\alpha} & f = \fpl, \\
\log n - \log\log\log n -\dexp-\eps & f = \fexp, \\
\sqrt{2\param{\log n -\log\log\log n -\dnorm-\eps}}  & f = \fnorm,
 \end{cases}
\]
where the three distributions are given by \eqref{eq:crackle:dist_pl}--\eqref{eq:crackle:dist_norm}, and
\begin{align*}
\dpl &=  \cpl\alpha 2^{-d} d^{-(1+d/2)}, \\
\dexp &= (1+d/2)\log d +d\log 2-\log \cexp ,\\
\dnorm &= (1+d/2)\log d + (d-1)\log 2 -\log \cnorm.
\end{align*}
If $R_n \le \Rnc$, then
\[
\prob{C_n} \to 1.
\]
\end{theorem}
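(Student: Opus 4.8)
The plan is to reduce the continuous covering event $C_n$ to a discrete occupancy event for a fixed grid, and then to bound the occupancy failure by a union bound. Tile $\R^d$ by axis-parallel cubes of side $s \teq 1/(2\sqrt{d})$, so that each cube has diameter $s\sqrt{d}=1/2$. The key geometric observation is that if $Q$ is any set of diameter at most $1$ contained in $\Brn$ and containing a sample point $X\in\cX_n$, then $Q\subset B_1(X)$ with $X\in\cX_n\cap\Brn$; hence $\Brn$ is covered as soon as every cell of a partition of $\Brn$ into such sets is occupied. The grid cubes lying entirely inside $\Brn$ supply most of these cells (each of volume exactly $s^d$), while the cubes clipped by $\partial\Brn$ are merged into an adjacent interior cube. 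The value $s=1/(2\sqrt d)$ is chosen precisely so that such a merged pair still has diameter $\sqrt{(2s)^2+(d-1)s^2}=\tfrac12\sqrt{1+3/d}\le 1$ for all $d\ge 1$. This produces a partition of $\Brn$ into $N_n\asymp\vol(\Brn)/s^d$ cells, each contained in $\Brn$, of diameter at most $1$, and of volume at least $s^d$.

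Write $\mu(Q)=\int_Q f$, and let $f(R_n)$ denote the (radial) value of the density at radius $R_n$. Since each of the three densities is radially decreasing, $f\ge f(R_n)$ throughout $\Brn$, so every cell satisfies $\mu(Q)\ge s^d f(R_n)$, and the union bound gives
\[
\prob{C_n^{c}}\ \le\ \sum_{Q}\bigl(1-\mu(Q)\bigr)^n\ \le\ N_n\exp\!\bigl(-n\,s^d f(R_n)\bigr).
\]
It therefore suffices to prove $\log N_n - n\,s^d f(R_n)\to-\infty$; since $\log N_n = d\log R_n + O(1)$, this amounts to showing that $n\,s^d f(R_n)$ exceeds $d\log R_n$ by an amount tending to infinity.

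It remains to insert each candidate radius and to check that the stated constants make this hold with room to spare, the surplus always being a positive multiple of $\log\log n$. Using $s^d = 2^{-d}d^{-d/2}$: for the power law $n\,s^d\fpl(R_n)=\tfrac{d}{\alpha}\bigl(\log n - e^{-\eps}\log\log n\bigr)$, whereas $\log N_n=\tfrac{d}{\alpha}\bigl(\log n-\log\log n\bigr)+O(1)$, leaving surplus $\tfrac{d}{\alpha}(1-e^{-\eps})\log\log n\to\infty$. For the exponential and Gaussian cases the term $-\log\log\log n$ in $\Rnc$ is exactly what turns into a factor $\log\log n$ in $f(R_n)$, and the constants $\dexp$, $\dnorm$ are tuned so that $n\,s^d\fexp(R_n)=d\,e^{\eps}\log\log n$ and $n\,s^d\fnorm(R_n)=\tfrac{d}{2}e^{\eps}\log\log n$, matching $\log N_n\sim d\log\log n$ and $\tfrac{d}{2}\log\log n$ respectively; the surplus is a positive multiple of $(e^{\eps}-1)\log\log n\to\infty$. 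Hence $\prob{C_n^{c}}\to 0$ when $R_n=\Rnc$, and since the bound $N_n\exp(-n\,s^d f(R_n))$ is increasing in the radius, the same holds for every $R_n\le\Rnc$.

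The step I expect to be most delicate is the boundary geometry, which is genuinely constant-sensitive rather than cosmetic, chiefly because of the Gaussian case. There $\fnorm(R_n)=\cnorm e^{-R_n^2/2}$ with $R_n\asymp\sqrt{2\log n}$, so replacing the effective radius by $R_n\pm O(1)$ multiplies the density by $e^{\mp\Theta(R_n)}=e^{\mp\Theta(\sqrt{\log n})}$, which dwarfs the $\log\log n$ margin and would shift or destroy the constant $\dnorm$. Consequently the cells must reach out to radius exactly $R_n$ while retaining volume $\asymp s^d$ and minimal density $f(R_n)$ — precisely the property secured by the merging construction above — rather than losing an additive constant in the radius, as a careless interior-grid argument would. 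Keeping this bookkeeping exact, together with checking that the merged boundary cells do not inflate $N_n$ beyond $\asymp\vol(\Brn)/s^d$, is the part requiring care; the probabilistic content is only the elementary union bound over cell occupancy.
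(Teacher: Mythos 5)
Your proposal is correct and follows essentially the same route as the paper: reduce covering of $\Brn$ to occupancy of a grid of cubes of side $1/(2\sqrt d)$, apply a union bound with $p(Q)\ge g^d f(R_n)$ by radial monotonicity, and check that the stated constants make $R_n^d e^{-ng^df(R_n)}\to 0$ in each of the three cases (your arithmetic for the surpluses of order $\log\log n$ matches the paper's choice of $\dpl,\dexp,\dnorm$). The only difference is that you handle the boundary cells explicitly by merging clipped cubes into interior neighbours, a point the paper dispatches with ``it is easy to show that $\tC_n\subset C_n$''; this is a refinement of, not a departure from, the paper's argument.
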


Theorem \ref{thm:crackle:core} implies that the core size has a completely different order of magnitude for each of the three distributions. The heavy-tailed, power-law distribution has the largest core, while the core of the Gaussian distribution is the smallest.
In the following sections we shall study the behaviour of the \cech complex outside the core.


\subsection{How Power-Law Noise Crackles} \label{sec:crackle:pl}


In this section we explore the crackling phenomenon in the power-law distribution $f=f_p$.
Let $\Brn\subset \R^d$ be the centered ball with radius $R_n$, and let
\[
    \CC_n \teq \CC(\cX_n \cap (\Brn)^c,1),
\]
be the \cech complex constructed from sample points outside $\Brn$.
We wish to study
\[
    \bk \teq \beta_k(\CC_n),
\]
the $k$-th Betti number of $\CC_n$.

Note that the minimum number of points required to form a $k$-dimensional cycle ($k\ge 1$) is $k+2$. For $k\ge 1$ and $\cY\subset \R^d$, denote
\[
    T_k(\cY) \teq \ind\set{\abs{\cY} = k+2,\ \beta_k(\CC(\cY,1)) = 1},
\]
i.e.\! $T_k$ takes the value $1$ if $\CC(\cY,1)$ is a minimal $k$-dimensional cycle, and $0$ otherwise. This indicator function will be used to define the limits of the Betti numbers.


\begin{theorem}\label{thm:crackle:mean_bk_pl}
If $\limninf n R_n^{-\alpha} = 0$, then
\begin{align*}
    \limninf \param{ n R_n^{d-\alpha} }^{-1}\mean{\beta_{0,n}} &= \mplo , \\
     \limninf\param{n^{k+2} R_n^{d- \alpha(k+2)}}^{-1}\mean{\bk} &= \mpl ,\quad 1 \le k \le d-1
\end{align*}
where
\begin{align}
\label{eq:crackle:mplo}
\mplo &\teq \frac{s_{d-1} \cpl}{\alpha-d}, \\
\label{eq:crackle:mpl}
\mpl &\teq \frac{s_{d-1}\cpl^{k+2}}{(\alpha(k+2)-d)(k+2)!}\int_{(\R^d)^{k+1}} T_k(0,\by)d\by, \quad 1\le k\le d-1,
\end{align}
and where $s_{d-1}$ is the surface area of the $(d-1)$-dimensional unit sphere in $\R^d$.
\end{theorem}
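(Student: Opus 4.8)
The plan is to show that, in the sparse regime forced by the hypothesis $\limninf n R_n^{-\alpha}=0$, the Betti numbers $\bk$ are governed entirely by \emph{minimal} $k$-cycles: connected clusters of exactly $k+2$ sample points whose \cech complex is homotopy equivalent to the boundary of a $(k+1)$-simplex (an $S^k$). Accordingly, I would introduce the minimal-cycle count
\[
    \Sk \teq \sum_{\cY \subset \cX_n \cap (\Brn)^c,\ \abs{\cY}=k+2} T_k(\cY),
\]
and reduce the theorem to the asymptotic identity $\mean{\bk}=\mean{\Sk}\,(1+o(1))$ together with a direct computation of $\mean{\Sk}$. Since homology is additive over connected components, $\bk=\sum_{C}\beta_k(C)$, every isolated minimal cycle contributes exactly $1$ to $\bk$, and every summand is nonnegative; this gives the lower bound $\bk\ge(\text{number of isolated minimal cycles})$ for free, while the upper bound will require splitting components by cardinality.

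For the leading term I would compute $\mean{\Sk}$ directly. For $\iid$ samples,
\[
    \mean{\Sk} = \binom{n}{k+2}\int_{((\Brn)^c)^{k+2}} T_k(x_1,\dots,x_{k+2})\prod_{i=1}^{k+2}\fpl(x_i)\,dx_1\cdots dx_{k+2}.
\]
Because $T_k$ is translation invariant and vanishes unless all $k+2$ points lie in a bounded ($O(1)$-diameter) cluster, I would substitute $x_i=x_1+y_{i-1}$, so that $T_k(x_1,\dots,x_{k+2})=T_k(0,\by)$ with $\by=(y_1,\dots,y_{k+1})\in(\R^d)^{k+1}$. On the relevant region $\norm{x_1}\ge R_n\to\infty$ the density is slowly varying, $\fpl(x_1+y)\sim\cpl\norm{x_1}^{-\alpha}$ uniformly for $\by$ in the bounded support of $T_k(0,\cdot)$, so up to a $(1+o(1))$ factor the integral factorizes into the relative-position integral $\int_{(\R^d)^{k+1}}T_k(0,\by)\,d\by$ and the radial integral
\[
    \cpl^{k+2}\int_{\norm{x}\ge R_n}\norm{x}^{-\alpha(k+2)}\,dx = \cpl^{k+2}\,s_{d-1}\,\frac{R_n^{d-\alpha(k+2)}}{\alpha(k+2)-d},
\]
which converges precisely because $\alpha>d$. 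Combining with $\binom{n}{k+2}\sim n^{k+2}/(k+2)!$ reproduces exactly $\mpl$ in \eqref{eq:crackle:mpl}. The $\beta_{0,n}$ case is simpler: the dominant components are isolated points, so $\mean{\beta_{0,n}}$ is asymptotic to the expected number of sample points outside $\Brn$, namely $n\int_{\norm{x}\ge R_n}\fpl\sim n\cpl s_{d-1}R_n^{d-\alpha}/(\alpha-d)$, yielding $\mplo$.

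The main obstacle is the comparison $\mean{\bk}\sim\mean{\Sk}$, i.e. controlling every non-minimal contribution. The unifying estimate I would establish is that adjoining one extra point to a bounded cluster sitting at radius $\approx R_n$ multiplies the expected count by a factor of order $n\cpl R_n^{-\alpha}$ (the local intensity integrated over a ball of radius $O(1)$), which tends to $0$ by hypothesis. This single estimate disposes of both error sources. For the lower bound, the expected number of minimal cycles that fail to be isolated is $O(nR_n^{-\alpha})\mean{\Sk}=o(\mean{\Sk})$, so the isolated minimal cycles alone already give $\mean{\Sk}(1+o(1))$. For the upper bound, I would bound $\beta_k(C)$ of any component with $\abs{C}\ge k+3$ by a fixed polynomial in $\abs{C}$ (for instance its number of $k$-faces) and use a Mecke-type expansion to show that each additional vertex again carries a factor $O(nR_n^{-\alpha})$, so the total $k$-th Betti contribution of the larger components is $o(\mean{\Sk})$ as well. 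Making rigorous the uniformity of $\fpl(x_1+y)\sim\cpl\norm{x_1}^{-\alpha}$ across the unbounded support, and checking that the boundary region where some $\norm{x_1+y_j}<R_n$ while $\norm{x_1}\ge R_n$ contributes only at relative order $o(1)$, are the delicate points, and are exactly where the power-law hypothesis $\alpha>d$ must be exploited with care.
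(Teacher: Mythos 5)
Your proposal is correct and follows essentially the same route as the paper: sandwich $\bk$ between the count of \emph{isolated} minimal $(k+2)$-point cycles (lower bound) and that count plus a correction from components with at least $k+3$ points (upper bound), compute $\mean{\Sk}$ by the change of variables $x_i = x_1 + y_{i-1}$ followed by the radial integral $s_{d-1}\int_{R_n}^{\infty} r^{d-1-\alpha(k+2)}dr$, and kill both correction terms using the factor $nR_n^{-\alpha}\to 0$ per extra vertex. The only cosmetic difference is that the paper packages the upper-bound correction as the explicit count $\Lk$ of connected $(k+3)$-point subsets rather than your polynomial-in-$\abs{C}$ bound with a Mecke expansion, but the underlying estimate is identical.
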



Next, we define the following values, which will serve as critical radii for the crackle,
\begin{align*}
\Roe &\teq n^{\param{\frac{1}{\alpha-d}+\eps}}, \\
\Ro &\teq \Ro^0 ,\\
\Rke &\teq n^{\param{\frac{1}{\alpha-d/(k+2)}+\eps}}, \quad (k\ge 1) \\
\Rk &\teq \Rk^0.
\end{align*}
The following is a straightforward corollary of Theorem \ref{thm:crackle:mean_bk_pl}, and summarizes the behaviour of $\mean{\bk}$ in the power-law case.


\begin{corollary}\label{cor:crackle:mean_bk_pl}
For $k\ge 0$ and $\eps >0$,
\[
    \limninf\mean{\bk} = \begin{cases}
    0 & R_n = \Rke, \\
    \mpl & R_n = \Rk ,\\
    \infty & R_n = \Rk^{-\eps},
    \end{cases}
\]
\end{corollary}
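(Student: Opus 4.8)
The plan is to derive all three cases directly from the asymptotic equivalences in Theorem \ref{thm:crackle:mean_bk_pl} by substituting the critical radii---which are all of the form $R_n = n^\gamma$---and reading off the sign of the resulting exponent of $n$. The only genuine checking required is that the hypothesis $\limninf n R_n^{-\alpha} = 0$ of the theorem is satisfied for each choice of $R_n$; once that is verified, the conclusion is pure exponent arithmetic.

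First I would treat the case $k \ge 1$. Writing $R_n = n^{\gamma}$, Theorem \ref{thm:crackle:mean_bk_pl} gives
\[
\mean{\bk} \sim \mpl \, n^{(k+2) + \gamma(d - \alpha(k+2))},
\]
and since $\alpha > d$ forces $d - \alpha(k+2) < 0$, this exponent is strictly decreasing in $\gamma$. Solving $(k+2) + \gamma(d-\alpha(k+2)) = 0$ yields exactly $\gamma = 1/(\alpha - d/(k+2))$, the exponent defining $\Rk$. Thus at $R_n = \Rk$ the power of $n$ vanishes and $\mean{\bk} \to \mpl$; at $R_n = \Rke$ the extra $+\eps$ makes the exponent equal $\eps(d-\alpha(k+2)) < 0$, giving $\mean{\bk}\to 0$; and at $R_n = \Rk^{-\eps}$ the exponent becomes $\eps(\alpha(k+2) - d) > 0$, giving $\mean{\bk}\to\infty$. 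The case $k=0$ is identical in spirit but uses the first displayed equivalence in the theorem, namely $\mean{\beta_{0,n}} \sim \mplo\, n^{1 + \gamma(d-\alpha)}$; here the exponent vanishes precisely at $\gamma = 1/(\alpha-d)$, the exponent defining $\Ro$, and the $\pm\eps$ perturbations again flip the sign of the exponent in the expected directions.

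The one point demanding care---and the step I expect to be the main, though mild, obstacle---is verifying the hypothesis $\limninf n R_n^{-\alpha} = 0$. With $R_n = n^\gamma$ this is equivalent to $\gamma > 1/\alpha$, i.e.\! to $R_n$ lying above the core radius $\Rnc \sim (n/\log n)^{1/\alpha}$. For $\Rk$ and $\Rke$ this is immediate, since $1/(\alpha - d/(k+2)) > 1/\alpha$ (and similarly $1/(\alpha-d)>1/\alpha$ for $k=0$). For the subcritical radius $\Rk^{-\eps}$ one must restrict to $\eps$ small enough that $1/(\alpha - d/(k+2)) - \eps > 1/\alpha$ still holds, so that the theorem remains applicable; because the strict inequality $1/(\alpha-d/(k+2)) > 1/\alpha$ leaves room, such $\eps$ exist, and for larger $\eps$ the radius only shrinks toward the core, which can only enlarge the point set generating $\CC_n$ and hence $\bk$, so the divergence $\mean{\bk}\to\infty$ persists. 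Assembling the three regimes for each $k$ then yields the stated piecewise limit.
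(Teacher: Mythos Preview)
Your approach is exactly the one the paper has in mind: the paper offers no proof of this corollary at all, merely calling it ``a straightforward corollary of Theorem~\ref{thm:crackle:mean_bk_pl}'', and the substitution $R_n=n^\gamma$ followed by exponent arithmetic is precisely that straightforward deduction. Your verification of the hypothesis $nR_n^{-\alpha}\to 0$ is likewise correct for $\Rk$ and $\Rke$, and for $\Rk^{-\eps}$ with $\eps$ small.

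The one genuine slip is the monotonicity patch you propose for large $\eps$: the claim that enlarging the point set ``can only enlarge \ldots $\bk$'' is false in general, since adding vertices to a \cech complex can fill in cycles as well as create them (Betti numbers are not monotone under inclusion of complexes, and the lower bound $\Skt\le\bk$ does not help either, because extra points can destroy the isolation of a component). Fortunately this patch is unnecessary. The corollary is meant, and is used in the subsequent layer discussion, only for small $\eps>0$---enough to exhibit the phase transition at each $\Rk$---and in that range your direct application of Theorem~\ref{thm:crackle:mean_bk_pl} already covers all three cases. Simply drop the monotonicity sentence and, if you wish to be explicit, note that the $\Rk^{-\eps}$ case is established for $0<\eps<\tfrac{1}{\alpha-d/(k+2)}-\tfrac{1}{\alpha}$.
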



Theorem \ref{thm:crackle:mean_bk_pl} and Corollary \ref{cor:crackle:mean_bk_pl} reveal that the crackling behaviour is organized into separate `layers', see Figure \ref{fig:crackle:layers}. Dividing $\R^d$ into a sequence of annuli at radii
\[
    \Roe \gg \Ro \gg R_{1,n}^\eps \gg R_{1,n} \gg \cdots \gg R_{d-1,n}^\eps \gg R_{d-1,n} \gg \Rnc,
\]
we observe a different behaviour of the Betti numbers in each annulus.
We shall briefly review the behaviour in each annulus, in a decreasing order of radii values. The following description is mainly qualitative, and refers to expected values only.
\begin{itemize}
\item $[\Roe,\infty)$ - there are hardly any points ($\beta_k\sim 0$, $0\le k \le d-1$).
\item $[\Ro,\Roe)$ - points start to appear, and $\beta_0\sim \mplo$. The points are very few and scattered, so no cycles are generated ($\beta_k \sim 0$, $1\le k \le d-1$).
\item $[R_{1,n}^\eps,\Ro)$ - the number of components grows to infinity, but no cycles are formed yet ($\beta_0 \sim \infty$, and $\beta_k = 0$, $1 \le k \le d-1$).
\item $[R_{1,n},R_{1,n}^\eps)$ - a finite number of $1$-dimensional cycles show up, among the infinite number of components ($\beta_0 \sim \infty$, $\beta_1\sim\mu_{\mathrm{p},1}$, and $\beta_k = 0$, $1 \le k \le d-1$).
\item $[R_{2,n}^\eps,R_{1,n})$ - we have $\beta_0\sim\infty$, $\beta_1\sim\infty$, and $\beta_k\sim 0$ for $k\ge 1$.
\end{itemize}
This process goes on, until the $(d-1)$-dimensional cycles appear -
\begin{itemize}
\item $[R_{d-1},R_{d-1}^\eps)$ - we have $\beta_{d-1}\sim \mu_{\mathrm{p},d-1}$ and $\beta_k\sim\infty$ for $0\le k \le d-2$.
\item $[\Rnc,R_{d-1})$ - just before we reach the core, the complex exhibits the most intricate structure, with $\beta_k \sim \infty$ for $0\le k \le d-1$.
\end{itemize}

Note that there is a very fast phase transition as we move from the contractible core to the first crackle layer. At this point we do not know exactly where and how this phase transition takes place. A reasonable conjecture would be that the transition occurs at $R_n = n^{1/\alpha}$ (since at this radius the term $n R_n^{-\alpha}$ that appears in Theorem \ref{thm:crackle:mean_bk_pl} changes its limit, affecting the limiting Betti numbers). However, this remains for future work.

\begin{figure}[h]
\centering
  \includegraphics[scale=0.6]{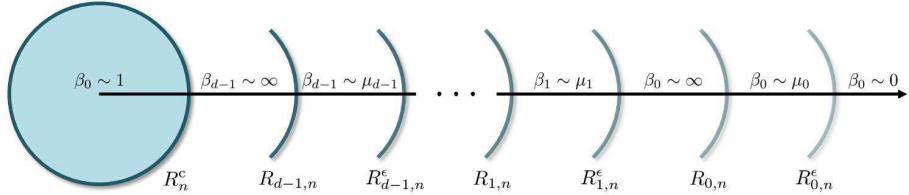}
\caption[Crackle layers]{The layered behaviour of crackle. Inside the core ($B_{\Rnc}$) the complex consists of a single component and no cycles. The exterior of the core is divided into separate annuli. Going from right to left, we see how the Betti numbers grow. In each annulus we present the Betti number that was most recently changed.
}
\label{fig:crackle:layers}
\end{figure}


\subsection{How Exponential Noise Crackles} \label{sec:crackle:exp}


In this section we focus on the exponential density function $f=\fexp$.
The results in this section are very similar to the those for the power law distribution, and we shall describe them briefly. Differences lie in the specific values of the $\Rk$ and in the terms in the limit formulae.


\begin{theorem}\label{thm:crackle:mean_bk_exp}
If $\limninf ne^{-R_n} = 0$, then
\begin{align*}
    \limninf \param{n R_n^{d-1} e^{-R_n}}^{-1}\mean{\beta_{0,n}} &= \mexpo , \\
     \limninf\param{n^{k+2} R_n^{d-1} e^{-(k+2)R_n}}^{-1} \mean{\bk} &= \mexp ,\ k\ge 1
\end{align*}
where
\begin{align}
\label{eq:crackle:mexpo}
\mexpo &\teq s_{d-1}\cexp,\\
\label{eq:crackle:mexp}
\mexp &\teq \frac{s_{d-1}\cexp^{k+2}}{(k+2)!} \int_0^\infty \int_{(\R^d)^{k+1}}T_k(0,\by) e^{-\param{(k+2)\rho + \sum_{i=1}^{k+1}y_i^1}} \prod_{i=1}^{k+1} \ind\set{y_i^1 > -\rho} d\by d\rho,
\end{align}
and where $y_i^1$ is the first coordinate of $y_i\in\R^d$.
\end{theorem}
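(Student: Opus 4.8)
The plan is to run the argument exactly as for the power-law case of Theorem \ref{thm:crackle:mean_bk_pl}, substituting the exponential tail computations for the power-law ones. By the remark in Section \ref{poisson:subsec} I would work with the Poisson process $\cP_n$ of intensity $n\fexp$, since Palm/Mecke calculus makes the expectation bookkeeping cleanest, and transfer back to $\cX_n$ at the end. The first step is a topological reduction: because a nontrivial $k$-cycle needs at least $k+2$ points and because outside $\Brn$ the sample is extremely sparse, the leading contribution to $\mean{\bk}$ ought to come from subsets $\cY\subset\cP_n\cap\Brn^c$ of size exactly $k+2$ with $T_k(\cY)=1$ that are also \emph{isolated} (no further point of $\cP_n$ within distance $2$ of $\cY$). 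Writing $M_{k,n}$ for the number of such isolated minimal $k$-cycles, I would establish
\[
\mean{\bk} = \mean{M_{k,n}} + o\param{n^{k+2} R_n^{d-1} e^{-(k+2)R_n}}.
\]
For $k=0$ the same reasoning degenerates to counting isolated points, so $\mean{\beta_{0,n}}$ is asymptotic to the expected number of points outside the core, $n\,\PP(\norm{X}>R_n)$.

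Next, by the multivariate Mecke formula,
\[
\mean{M_{k,n}} = \frac{n^{k+2}}{(k+2)!}\int_{(\Brn^c)^{k+2}} T_k(x_1,\ldots,x_{k+2})\, p_{\mathrm{iso}}(x_1,\ldots,x_{k+2})\prod_{i=1}^{k+2}\fexp(x_i)\,dx_i,
\]
where $p_{\mathrm{iso}}$ is the (exponential-of-expected-count) probability that the configuration is isolated. Since $T_k$ forces the configuration to have bounded diameter and the $2$-neighbourhood has bounded volume while the ambient intensity near radius $R_n$ is $o(1)$, one checks $p_{\mathrm{iso}}\to1$ uniformly on the relevant region, so it may be dropped at leading order.

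The exponential tail enters in the asymptotic evaluation of this integral. Using translation invariance of $T_k$ I set $x_1=x$ and $x_i=x+y_{i-1}$, so $T_k(x_1,\ldots,x_{k+2})=T_k(0,\by)$ with $\by\in(\Rd)^{k+1}$ of bounded support. Passing to polar coordinates $x=\rho'\omega$ and using the first-order expansion
\[
\norm{x+y_i} = \norm{x} + \iprod{\omega, y_i} + O(1/\norm{x}),
\]
uniform over the bounded support of $T_k$, the density product becomes $\cexp^{k+2}\exp\param{-(k+2)\rho' - \sum_{i=1}^{k+1}\iprod{\omega,y_i}+o(1)}$. By rotational invariance of $T_k$ I rotate $\omega$ to $e_1$ inside the $\by$-integral, which replaces $\iprod{\omega,y_i}$ by $y_i^1$ and makes that integral independent of $\omega$, so the $\omega$-integral over $S^{d-1}$ yields the factor $s_{d-1}$. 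Introducing the boundary-layer variable $\rho=\rho'-R_n\ge 0$, the volume factor satisfies $\rho'^{d-1}=(R_n+\rho)^{d-1}\sim R_n^{d-1}$, the exponential factorizes as $e^{-(k+2)R_n}e^{-(k+2)\rho}$, and the constraint $\norm{x+y_i}\ge R_n$ becomes $y_i^1>-\rho$ in the limit, producing the indicators $\ind\set{y_i^1>-\rho}$. Collecting everything gives $\mean{\bk}\sim n^{k+2}R_n^{d-1}e^{-(k+2)R_n}\,\mexp$ with $\mexp$ as in \eqref{eq:crackle:mexp}; the $k=0$ case collapses to the single radial integral and gives $\mexpo=s_{d-1}\cexp$.

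The two steps demanding genuine care are the topological reduction and the uniform control of the Taylor error, and I expect the former to be the principal obstacle. For the reduction I would need quantitative bounds -- of the Morse-theoretic / inclusion-exclusion type already invoked for the power-law case -- showing that cycles on more than $k+2$ points, and the combinatorial interaction between distinct minimal cycles, contribute only $o(n^{k+2}R_n^{d-1}e^{-(k+2)R_n})$; here the hypothesis $\limninf n e^{-R_n}=0$ is exactly what makes each additional point cost a vanishing factor $n e^{-R_n}$, and is what drives the whole reduction (including the $k=0$ claim that merges are lower order than points). For the Taylor error, boundedness of the support of $T_k$ makes the remainder uniformly $O(1/R_n)$, so after integrating against $T_k(0,\by)$ over $\rho$ and $\by$ a dominated-convergence argument yields only a multiplicative $(1+o(1))$ correction, closing the estimate.
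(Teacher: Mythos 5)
Your proposal follows essentially the same route as the paper: sandwiching $\bk$ between the count of isolated minimal $(k+2)$-point $k$-cycles and that count plus the number of connected $(k+3)$-point configurations (the paper's inequality \eqref{eq:crackle:betti_k_ineq}), evaluating the expectations by a Mecke/Bernoulli-sum formula in polar coordinates with rotation invariance of $T_k$, the boundary-layer substitution $r\to R_n+\rho$, and dominated convergence, and using $ne^{-R_n}\to 0$ both to drop the isolation probability and to show the $(k+3)$-point correction is lower order. The only cosmetic difference is your use of the Poisson process and the Mecke formula where the paper works with the binomial sample and $\binom{n}{k+2}(1-p)^{n-k-2}$ factors, an interchange the paper itself licenses in Section \ref{poisson:subsec}.
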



Next, define
\begin{align*}
\Roe &\teq  \log n + \param{d-1+\eps}\log \log n, \\
\Ro &\teq \Ro^0 ,\\
\Rke &\teq \log n + \param{\frac{d-1}{k+2}+\eps}\log \log n, \quad (k\ge 1)\\
\Rk &\teq \Rk^0.
\end{align*}
From Theorem \ref{thm:crackle:mean_bk_exp} we can conclude the following.


\begin{corollary}\label{cor:crackle:mean_bk_exp}
For $k\ge 0$ and $\eps >0$,
\[
    \limninf\mean{\bk} = \begin{cases}
    0 & R_n = \Rke, \\
    \mexp & R_n = \Rk ,\\
    \infty & R_n = \Rk^{-\eps},
    \end{cases}
\]
\end{corollary}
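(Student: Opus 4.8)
The plan is to read the corollary directly off the asymptotics supplied by Theorem~\ref{thm:crackle:mean_bk_exp}, the point being that the three radii $\Rke,\Rk,\Rk^{-\eps}$ are calibrated so that the polynomial-in-$\log n$ factors in the theorem's normalizing constant respectively fall short of, exactly balance, or overshoot. For $k\ge 1$ abbreviate the normalizing factor as $g_k(R_n)\teq n^{k+2}R_n^{d-1}e^{-(k+2)R_n}$, and for $k=0$ as $g_0(R_n)\teq nR_n^{d-1}e^{-R_n}$. Theorem~\ref{thm:crackle:mean_bk_exp} then says that, whenever $\limninf ne^{-R_n}=0$, one has $\mean{\bk}=\param{\mexp+o(1)}g_k(R_n)$ (and $\mean{\beta_{0,n}}=\param{\mexpo+o(1)}g_0(R_n)$). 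Since $\mexp$ is a fixed finite positive constant, the whole problem reduces to evaluating $\limninf g_k(R_n)$ at each of the three radii.

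First I would carry out that evaluation. All three radii have the form $R_n=\log n+c\log\log n$, so $R_n^{d-1}\sim(\log n)^{d-1}$ and $e^{-(k+2)R_n}=n^{-(k+2)}(\log n)^{-(k+2)c}$, whence
\[
g_k(R_n)\sim(\log n)^{\,d-1-(k+2)c},\qquad k\ge 1,
\]
and likewise $g_0(R_n)\sim(\log n)^{\,d-1-c}$. For $k\ge 1$ the three choices are $c=\tfrac{d-1}{k+2}+\eps,\ \tfrac{d-1}{k+2},\ \tfrac{d-1}{k+2}-\eps$, which make the exponent $d-1-(k+2)c$ equal to $-(k+2)\eps,\ 0,\ +(k+2)\eps$, so that $g_k\to 0,\ 1,\ \infty$; multiplying by $\mexp$ yields the claimed values $0,\ \mexp,\ \infty$. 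The case $k=0$ is identical with $c=d-1+\eps,\ d-1,\ d-1-\eps$, giving exponents $-\eps,\ 0,\ \eps$ and hence $0,\ \mexpo,\ \infty$. Thus the $\log\log n$ correction built into the radii is exactly the amount needed to cancel the $R_n^{d-1}$ surface-area factor precisely at the critical radius $\Rk$.

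The step that is not purely mechanical is verifying the hypothesis $\limninf ne^{-R_n}=0$, and this is where the main obstacle lies. Since $ne^{-R_n}=(\log n)^{-c}$, the hypothesis holds automatically at $\Rke$ and $\Rk$ (where $c>0$), so the values $0$ and $\mexp$ follow at once. At $\Rk^{-\eps}$ one has $c=\tfrac{d-1}{k+2}-\eps$ (resp.\ $c=d-1-\eps$ for $k=0$), and the hypothesis — together with the immediate conclusion $\mean{\bk}\to\infty$ — is guaranteed only while $\eps<\tfrac{d-1}{k+2}$ (resp.\ $\eps<d-1$), which is exactly the thin transition-layer range the corollary is really describing. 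For larger $\eps$ the radius is pushed toward the core, the point density is no longer low, and the divergence cannot be read off the first-moment formula; moreover Betti numbers are not monotone under the addition of the densely packed near-core points, so one cannot simply deduce the large-$\eps$ case from the small-$\eps$ case by monotonicity. In the still-sparse part of this range I would recover $\mean{\bk}\to\infty$ by a direct first-moment lower bound that counts only \emph{isolated} minimal $k$-cycles in $(\Brn)^c$ — configurations of $k+2$ points realizing $T_k$ with no further sample point within distance $2$, each of which forms a component homotopy equivalent to $S^k$ and hence contributes $+1$ to $\bk$ — whose expectation again scales like $g_k(R_n)$ up to a bounded-below emptiness probability. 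Controlling $\bk$ in the genuinely dense regime immediately outside the core, where isolated cycles no longer dominate, is the delicate residual point, and I expect it to need the covering estimates of Section~\ref{sec:crackle:core} rather than the moment method alone.
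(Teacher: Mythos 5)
Your proposal is correct and follows essentially the same route as the paper: Corollary \ref{cor:crackle:mean_bk_exp} is obtained by substituting $R_n=\log n+c\log\log n$ into the normalization $n^{k+2}R_n^{d-1}e^{-(k+2)R_n}$ of Theorem \ref{thm:crackle:mean_bk_exp} and reading off the sign of the exponent $d-1-(k+2)c$; the paper treats this as immediate and supplies no further argument. Your additional observation that the hypothesis $\limninf ne^{-R_n}=0$ fails at $R_n=\Rk^{-\eps}$ once $\eps\ge\tfrac{d-1}{k+2}$ is a genuine subtlety the paper glosses over, and your proposed fix via isolated minimal $k$-cycles goes beyond what the paper itself proves.
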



As in the power-law case, Theorem \ref{thm:crackle:mean_bk_exp} implies the same `layered' behaviour, the only difference being in the values of $\Rk$. From examining the values of $\Rnc$, and $\Rk$ it is reasonable to guess that the phase transition in the exponential case occurs at $R_n = \log n$.


\subsection{Gaussian Noise Does Not Crackle} \label{sec:crackle:norm}


Simplicial complexes built over vertices sampled from
the standard Gaussian distribution exhibit a completely different behaviour to that we saw in the power-law and  exponential cases. Define
\[
\Roe \teq \sqrt{2\log n + (d-2+\eps)\log\log n},
\]
then
\begin{theorem}\label{thm:crackle:mean_bk_norm}
If $f=\fnorm$, $\eps > 0$, and $R_n = \Roe$, then for $0\le k \le d-1$
\[
\limninf \mean{\bk} = 0.
\]
\end{theorem}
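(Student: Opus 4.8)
The plan is to exploit the fact that, at the radius $R_n = \Roe$, the Gaussian tail is so thin that, in expectation, there are essentially no sample points left outside the core ball $\Brn$; once this is established, every Betti number must vanish for purely combinatorial reasons. Write $N_n \teq \abs{\cX_n \cap \Brn^c}$ for the number of sample points outside the ball, so that $\CC_n = \CC(\cX_n \cap \Brn^c, 1)$ is built on $N_n$ vertices. Since the number of $k$-simplices of any simplicial complex on $N_n$ vertices is at most $\binom{N_n}{k+1}$, and the $k$-th Betti number never exceeds the number of $k$-simplices, we obtain the deterministic bound $\bk = \beta_k(\CC_n) \le \binom{N_n}{k+1}$ for every $k \ge 0$ (for $k=0$ this is just $\beta_0 \le N_n$, the number of components being at most the number of points).

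Next I would compute the relevant moment. The points of $\cX_n$ are $\iid$, so $N_n$ is Binomial with parameters $n$ and $p_n \teq \prob{\norm{X_1} > R_n}$, whence the factorial-moment identity gives $\mean{\binom{N_n}{k+1}} = \binom{n}{k+1} p_n^{k+1} \le (n p_n)^{k+1}/(k+1)!$. Combined with the bound above, this yields $\mean{\bk} \le (n p_n)^{k+1}/(k+1)!$, and since $k+1 \ge 1$, the entire theorem reduces to the single estimate $n p_n \to 0$.

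The heart of the matter is therefore the Gaussian tail asymptotics. In polar coordinates $p_n = \cnorm s_{d-1} \int_{R_n}^\infty r^{d-1} e^{-r^2/2}\, dr$, and an integration by parts (or Laplace's method) gives $\int_{R_n}^\infty r^{d-1} e^{-r^2/2}\, dr \sim R_n^{d-2} e^{-R_n^2/2}$ as $n \to \infty$. Substituting the exact value $R_n^2 = 2\log n + (d-2+\eps)\log\log n$ yields $e^{-R_n^2/2} = n^{-1}(\log n)^{-(d-2+\eps)/2}$ and $R_n^{d-2} \sim (2\log n)^{(d-2)/2}$, so that $n p_n \sim \cnorm s_{d-1}\, 2^{(d-2)/2} (\log n)^{-\eps/2}$, which tends to $0$ for every $\eps > 0$. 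This is precisely the calibration encoded in the definition of $\Roe$: the $\log n$ term cancels the factor $n$, the $(d-2)\log\log n$ term cancels the polynomial prefactor $R_n^{d-2}$, and the surplus $\eps \log\log n$ leaves the decaying remainder $(\log n)^{-\eps/2}$.

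The step I expect to require the most care is bookkeeping rather than conceptual: making the tail estimate $\int_{R_n}^\infty r^{d-1}e^{-r^2/2}\,dr \sim R_n^{d-2}e^{-R_n^2/2}$ rigorous with explicit control of the lower-order remainder, and checking that the correction $R_n = \sqrt{2\log n}\,(1 + O(\log\log n/\log n))$ does not disturb the exponent of $\log n$ after raising to the power $d-2$. Since only an upper bound of the correct order is needed, it suffices to dominate the integral by a constant multiple of $R_n^{d-2}e^{-R_n^2/2}$ for all large $n$, which avoids any delicate two-sided estimate. Everything else follows at once from the combinatorial inequality and the first factorial moment.
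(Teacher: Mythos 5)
Your proposal is correct, and its core is identical to the paper's: calibrate the Gaussian tail so that at $R_n=\Roe$ the expected number of sample points outside $\Brn$ tends to zero, then convert ``no points outside'' into ``no homology outside.'' The tail computation you give, $n\,p_n \sim \cnorm s_{d-1} n R_n^{d-2}e^{-R_n^2/2} = \cnorm s_{d-1}2^{(d-2)/2}(\log n)^{-\eps/2}(1+o(1))$, is exactly the paper's estimate for $\mean{\So}$ (obtained there by the substitution $r\to(\rho^2+R_n^2)^{1/2}$ and dominated convergence rather than integration by parts; either works, and only an upper bound is needed). The one place you diverge is the final combinatorial step: the paper invokes the bound $\bk\le\So$ and stops, whereas you use $\bk\le\binom{N_n}{k+1}$ together with the binomial factorial moment $\mean{\binom{N_n}{k+1}}=\binom{n}{k+1}p_n^{k+1}\le (np_n)^{k+1}/(k+1)!$. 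Your version is arguably the safer one: for an arbitrary simplicial complex on $N$ vertices one only has $\beta_k\le\binom{N}{k+1}$ (the $k$-skeleton of a simplex shows $\beta_k\le N$ is false in general), so the paper's inequality $\bk\le\So$ implicitly relies on complexity bounds for unions of congruent balls in $\R^d$, while your bound is purely combinatorial and still yields $\mean{\bk}\to 0$ since $np_n\to 0$. In short: same strategy, with a more elementary and fully self-contained closing step.
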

Note that in the Gaussian case $\limninf\param{ \Roe - \Rnc} = 0$. This implies that as $n\to\infty$ we have the core which is contractible, and outside the core there is hardly anything. In other words, the ball placed around every new point we add to the sample immediately connects to the core, and thus, the Gaussian noise \textit{does not crackle}.


\section{Proofs}
\label{proofs:sec}
We now turn to proofs, starting with the proof of the main result  of Section \ref{sec:crackle:core}.

\subsection{The Core}


\begin{proof}[Proof of Theorem \ref{thm:crackle:core}]
The proof covers all three distributions, except for specific calculations near the end.
Take a grid on $\R^d$ of size $g = \frac{1}{2\sqrt{d}}$.
Let $\cQ_n$ be the collection of cubes in this grid that are contained in  $B_{R_n}$. Let $\tC_n$ be the following event
\[
    \tC_n \teq \set{\forall Q\in \cQ_n : Q\cap \cX_n \ne \emptyset},
\]
i.e.\! $\tC_n$ is the event that every cube in $\cQ_n$ contains at least one point from $\cX_n$. Recall the definition of $C_n$,
\[
    C_n \teq \set{\Brn \subset \bigcup_{X\in \cX_n \cap \Brn} B_1(X)}.
\]
Then it is easy to show that $\tC_n \subset C_n$. The complementary event $\tC_n^c$ is the event that at least one cube is empty. Thus,
\[
\probx{\tC_n^c} \le \sum_{Q\in \cQ_n} \prob{Q \cap \cX_n = \emptyset} = \sum_{Q\in \cQ_n}(1-p(Q))^n \le \sum_{Q\in \cQ_n}e^{-np(Q)}
\]
where
\[
p(Q) = \int_Q f(z)dz \ge  g^d f(R_n).
\]
In addition, the number of cubes that are contained in $B_{R_n}$ is less than $\param{2{{R_n}/{g}}}^d$.
Therefore,
\begin{equation}\label{eq:crackle:empty_box}
\probx{\tC_n^c} \le (2 g^{-1})^d R_n^d e^{-n g^d f(R_n) }.
\end{equation}
Now, choose any $\eps > 0$ and set
\[
R_n = \Rnc \triangleq \begin{cases} \param{\frac{\dpl n}{\param{\log n - e^{-\eps} \log \log n}}-1}^{1/\alpha} & f = \fpl, \\
\log n - \log\log\log n -\dexp-\eps & f= \fexp, \\
\sqrt{2\param{\log n -\log\log\log n -\dnorm-\eps}}  & f = \fnorm,
 \end{cases}
\]
where
\begin{align*}
\dpl &=  \cpl\alpha 2^{-d} d^{-(1+d/2)}, \\
\dexp &= \log d -\log \cexp - \log g^d, \\
\dnorm &= \log(d/2) -\log \cnorm - \log g^d.
\end{align*}
It is easy to verify that in all cases we have
\[
R_n^d e^{-n g^d f(R_n) }  \to 0.
\]
Thus, from \eqref{eq:crackle:empty_box} we conclude that $\probx{\tC_n} \to 1$. Since $\prob{C_n} \ge \probx{\tC_n}$ we now have that for $R_n = \Rnc$, in each of the distributions,
\[
\prob{C_n} \to 1,
\]
which completes the proof.
\end{proof}


\subsection{Crackle - Notation and General Lemmas}


For $R_n > 0$, set
\[
    \cxr \teq \cX_n \cap (\Brn)^c,
\]
i.e.\! $\cxr$ consists of the points of $\cX_n$ located outside the ball $\Brn$. Next, recall the definition of $T_k$,
\[
    T_k(\cY) \teq \ind\set{\abs{\cY} = k+2,\ \beta_k(\CC(\cY,1)) = 1},
\]
for $\cY \subset \R^d$, and write
\begin{align*}
    \So & \teq \abs{\cxr}, \\
    \Sot &\teq \#\set{X \in \cxr : X \textrm{ is a connected component of } \CC(\cX_n,1)}\\
    \Sk &\teq \sum_{\cY \subset \cxr} T_k(\cY),\\
    \Skt &\teq \sum_{\cY \subset \cxr} T_k(\cY)\ind\set{\CC(\cY,1) \textrm{ is a connected component of } \CC(\cX_n,1)},\\
    \Lk &\teq \sum_{\cY \subset \cxr} \ind\set{\abs{\cY} = k+3,\ \CC(\cY,1) \textrm{ is connected}},
\end{align*}
where $k\ge 1$. Observe that
\begin{align}
\label{eq:crackle:betti_0_ineq}
\Sot \le &\beta_{0,n} \le \So \\
\label{eq:crackle:betti_k_ineq}
    \Skt \le &\bk \le \Skt + \Lk,\quad k\ge 1
\end{align}
We will evaluate the limits of $\mean{\Sk}$, $\meanx{\Skt}$ and $\mean{\Lk}$ and deduce from these the limit of $\mean{\bk}$.

In addition,  set
\begin{align*}
    \eone &\teq (1,0,\ldots,0) \in \R^d ,\\
    f(r) &\teq f(r \eone), \ \ r \in \R, \\
    U(\bx) &\teq \bigcup_{i=1}^{k} B_2(x_i), \ \ \bx\in (\R^d)^k, \\
    p(\bx) &\teq \int_{U(\bx)}f(z)dz,\ \ \bx\in (\R^d)^k.
\end{align*}
The following two lemmas are purely technical, but will considerably simplify our computations later.


\begin{lemma}\label{lem:crackle:symmetry_zero}
Let  $f:\R^d\to\R$ be a spherically symmetric probability density.
Then,
\begin{align*}
\mean{\So} &=  s_{d-1}n \int_{R_n}^\infty r^{d-1}f(r)dr ,\\
\meanx{\Sot} &=  s_{d-1}n \int_{R_n}^\infty r^{d-1}f(r)(1-np(r\eone))^{n-1}dr,
\end{align*}
where $s_{d-1}$ is the volume of the $d-1$ dimensional unit sphere.
\end{lemma}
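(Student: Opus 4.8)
The plan is to prove both identities by the same two-step recipe: express each count as a sum of indicators over the $n$ sample points, take expectations using linearity and the i.i.d.\ assumption, and then collapse the resulting integral over $\R^d$ into a one-dimensional radial integral via spherical symmetry. The first identity is essentially immediate; the second needs one small observation about $p$.

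For $\mean{\So}$ I would write $\So = \sum_{i=1}^n \ind\set{\norm{X_i}>R_n}$, so that linearity and the common density $f$ give $\mean{\So} = n\,\probx{\norm{X_1}>R_n} = n\int_{\norm{x}>R_n} f(x)\,dx$. Passing to polar coordinates, and using that $f$ depends only on $r=\norm{x}$ with $f(r)\teq f(r\eone)$ as defined, this becomes $s_{d-1} n \int_{R_n}^\infty r^{d-1} f(r)\,dr$.

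For $\meanx{\Sot}$ I would first recall that in $\CC(\cX_n,1)$ two vertices span an edge precisely when their unit balls meet, i.e.\ when their distance is at most $2$; hence a point $X_i\in\cxr$ is its own connected component of $\CC(\cX_n,1)$ exactly when no \emph{other} point of the full sample lies in $B_2(X_i)$. Writing $\Sot = \sum_{i=1}^n \ind\set{\norm{X_i}>R_n}\,\ind\set{X_i\text{ isolated in }\cX_n}$ and conditioning on $X_i=x$, the remaining $n-1$ points are i.i.d.\ with density $f$ and each lands outside $B_2(x)$ with probability $1-p(x)$, where $p(x)=\int_{B_2(x)}f(z)\,dz$ is exactly the $p$ of the preamble (with $U(x)=B_2(x)$). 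By independence the conditional isolation probability is $(1-p(x))^{n-1}$, so $\meanx{\Sot}=n\int_{\norm{x}>R_n}f(x)\,(1-p(x))^{n-1}\,dx$.

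The one step that deserves care, and the only place where spherical symmetry is used a second time, is the radial reduction of this last integral: it is valid because $p$ inherits rotation invariance from $f$. Indeed, for any orthogonal $\rho$, substituting $z\mapsto\rho z$ in $p(\rho x)$ and using $f(\rho z)=f(z)$ gives $p(\rho x)=p(x)$, so $p(x)$ depends only on $\norm{x}$ and equals $p(r\eone)$. Polar coordinates then yield $s_{d-1}n\int_{R_n}^\infty r^{d-1}f(r)(1-p(r\eone))^{n-1}\,dr$. I do not expect a genuine obstacle; the points most worth getting right are the edge threshold $2$ (so the competing region is $B_2$, not $B_1$) and the fact that isolation is tested against the whole sample $\cX_n$, which is why $p(x)$ integrates $f$ over all of $B_2(x)$ rather than only over $B_2(x)\cap(\Brn)^c$. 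The natural computation produces the per-competitor void probability $1-p(r\eone)$ raised to the power $n-1$.
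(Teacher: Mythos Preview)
Your proof is correct and follows essentially the same route as the paper's: write each count as a sum of indicators, take expectations using linearity and the $\iid$ structure, then reduce via polar coordinates and spherical symmetry (your explicit check that $p$ is rotation invariant is a detail the paper leaves implicit). Note that your derivation produces $(1-p(r\eone))^{n-1}$, which matches the paper's own proof and the way the formula is used downstream; the ``$np$'' in the displayed statement is a typo for ``$p$''.
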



\begin{proof}
$\So$ is simply a sum of Bernoulli variables, therefore
\[
\mean{\So}  = n \prob{\norm{X} > R_n} = n\int_{\R^d} f(x)\ind\set{\norm{x}>R_n}dx.
\]
Writing the integral in polar coordinates yields
\[
\mean{\So} = n \int_{R_n}^\infty \int_{S^{d-1}} f(r\theta)r^{d-1}J(\theta)d\theta dr,
\]
where $J(\theta) = \abs{\frac{\partial x}{\partial \theta}}$.
Since $f$ is spherically symmetric, $f(r\theta) = f(r)$, and therefore
\[
\mean{\So} = s_{d-1} n \int_{R_n}^\infty r^{d-1} f(r) dr.
\]
The proof for $\Sot$ is similar, using the fact that the probability that a point $x\in\R^d$ is disconnected from the rest of the complex $\CC(\cX_n,1)$ is $(1-p(x))^{n-1}$.


\end{proof}


\begin{lemma}\label{lem:crackle:symmetry}
Let $f:\R^d\to\R$ be a spherically symmetric probability density.
Then, for $k\ge 1$,
\begin{align*}
\mean{\Sk} &=  s_{d-1} \binom{n}{k+2} \int_{R_n}^\infty r^{d-1}f(r)G_k(r)dr ,\\
\meanx{\Skt} &= s_{d-1} \binom{n}{k+2}\int_{R_n}^\infty r^{d-1}f(r)\hat{G}_k(r)dr,
\end{align*}
where $s_{d-1}$ is the volume of the $d-1$ dimensional sphere, and where
\begin{align*}
    G_k(r) &\triangleq \int_{(\R^d)^{k+1}} f(\norm{r\eone+\by}) T_k(0,\by) \prod_{i=1}^{k+1} \ind\set{\norm{r\eone+y_i} > R_n} d\by, \\
\hat{G}_k(r) &\triangleq \int_{(\R^d)^{k+1}} f(\norm{r\eone+\by}) T_k(0,\by) \prod_{i=1}^{k+1} \ind\set{\norm{r\eone+y_i} > R_n} \\
&\times (1-p(r\eone, r\eone+\by))^{n-k-2}d\by.
\end{align*}

\end{lemma}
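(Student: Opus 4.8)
The plan is to compute both expectations by the standard first-moment method for $U$-statistics over the sample, reducing each sum over subsets to $\binom{n}{k+2}$ copies of a single configuration integral, and then to exploit the spherical symmetry of $f$ together with the rigid-motion invariance of $T_k$ to collapse the resulting $(k+2)d$-dimensional integral into a one-dimensional radial integral.

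First I would note that, since $T_k(\cY)=0$ unless $\abs{\cY}=k+2$, both $\Sk$ and $\Skt$ are genuinely sums over the $(k+2)$-element subsets of $\cxr$. Writing each such subset as $\set{X_{i_1},\ldots,X_{i_{k+2}}}$ and using the exchangeability of the $\iid$ points, linearity of expectation gives
\[
\mean{\Sk} = \binom{n}{k+2}\,\mean{T_k(X_1,\ldots,X_{k+2})\prod_{j=1}^{k+2}\ind\set{\norm{X_j}>R_n}},
\]
with the analogous identity for $\Skt$ carrying the extra connected-component indicator. Expanding the remaining expectation as an integral of $T_k(x_1,\ldots,x_{k+2})\prod_{j=1}^{k+2}\ind\set{\norm{x_j}>R_n}\prod_{j=1}^{k+2}f(x_j)$ over $(\R^d)^{k+2}$ is then the object to simplify.

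For $\Skt$ the additional subtlety is the component indicator, which I would handle geometrically: a vertex $z\in\cX_n$ spans an edge with some $x_j\in\cY$ in $\CC(\cX_n,1)$ exactly when $B_1(z)\cap B_1(x_j)\ne\emptyset$, i.e.\ when $z\in B_2(x_j)$. Hence $\CC(\cY,1)$ is a connected component of $\CC(\cX_n,1)$ iff $\CC(\cY,1)$ is connected---which $T_k(\cY)=1$ already guarantees for a minimal $k$-cycle, $k\ge1$---and none of the remaining $n-k-2$ points lands in $U(\cY)=\bigcup_j B_2(x_j)$. Conditioning on $\cY$, the remaining points are $\iid$, so this avoidance event has conditional probability $(1-p(\cY))^{n-k-2}$, which will become the factor $(1-p(r\eone,r\eone+\by))^{n-k-2}$ after the change of variables below.

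The final and most delicate step is the reduction via symmetry. Since $\CC(\cY,1)$, and hence $T_k$, depends only on the pairwise intersection pattern of the unit balls, $T_k$ is invariant under rigid motions, and the same holds for $U$ and $p$. I would substitute $y_i=x_{i+1}-x_1$ (with $x_1=x$), using translation invariance to replace $T_k(x_1,\ldots,x_{k+2})$ by $T_k(0,\by)$ and to re-express the component factor; the density product then splits as $f(x)\prod_{i=1}^{k+1}f(x+y_i)$, whose second piece is the paper's shorthand $f(\norm{r\eone+\by})$ once $x$ is aligned with $\eone$. Passing to polar coordinates $x=r\theta$ gives $f(x)=f(r)$ by spherical symmetry. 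The crux is that the inner $\by$-integral is independent of $\theta$: rotating the variables $y_i\mapsto R^{-1}y_i$, for the rotation $R$ taking $\eone$ to $\theta$, fixes the domain $(\R^d)^{k+1}$ and preserves $T_k$, each norm $\norm{r\theta+y_i}$, and $p$, so the inner integral equals its value at $\theta=\eone$, namely $G_k(r)$ (resp.\ $\hat{G}_k(r)$). Integrating $\theta$ over $S^{d-1}$ then contributes the factor $s_{d-1}$ and yields the stated formulas. I expect the rotation-invariance step to be the main point to state carefully, since it is exactly where spherical symmetry is used and requires simultaneously rotating the outer point and the inner configuration variables; the geometric identification of the component event with avoidance of $U(\cY)$ is the other place needing care. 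Everything else follows the template of the proof of Lemma~\ref{lem:crackle:symmetry_zero}.
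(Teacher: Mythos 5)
Your proposal is correct and follows essentially the same route as the paper's proof: reduce to a single $(k+2)$-fold configuration integral by exchangeability, translate by $x_1$ to exploit the translation invariance of $T_k$, and then pass to polar coordinates using rotation invariance to collapse the angular integral into the factor $s_{d-1}$. Your treatment of the connected-component indicator via the avoidance probability $(1-p(\cY))^{n-k-2}$ is exactly the step the paper leaves implicit with ``the proof for $\Skt$ is similar,'' and you carry it out correctly.
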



\begin{proof}
The proof is in the same spirit of the proof of Lemma \ref{lem:crackle:symmetry_zero}, but technically more complicated.
Thinking of $\Sk$ as a sum of Bernoulli variables, we have that
\[
\mean{\Sk} =  \binom{n}{k+2}\int_{(\R^d)^{k+2}}f(\bx) T_k(\bx) \prod_{i=1}^{k+2} \ind\set{\norm{x_i} > R_n} d\bx.
\]
Let $I_k$ denote the integral above. Then, using the change of variables
\begin{align*}
    x_1 &\to x, \qquad x_i \to x+ y_{i-1} ,\ \  (i>1),
\end{align*}
yields
\begin{align*}
I_k &= \int_{\norm{x}\ge R_n}  \int_{(\R^d)^{k+1}} f(x)f(x+\by) T_k(x,x+\by) \prod_{i=1}^{k+1} \ind\set{\norm{x+y_i} > R_n}d\by dx \\
&=\int_{\norm{x}\ge R_n}  \int_{(\R^d)^{k+1}} f(x)f(x+\by) T_k(0,\by) \prod_{i=1}^{k+1} \ind\set{\norm{x+y_i} > R_n}d\by dx.
\end{align*}
Moving to polar coordinates yields
\begin{align*}
I_k &= \int_{R_n}^\infty \int_{S^{d-1}}  \int_{(\R^d)^{k+1}} f(r\theta)f(r\theta+\by) T_k(0,\by) \\
&\times\prod_{i=1}^{k+1} \ind\set{\norm{r\theta+y_i} > R_n} r^{d-1}J(\theta)d\by d\theta dr\\
&= \int_{R_n}^\infty r^{d-1}f(r) \int_{S^{d-1}}  J(\theta)\int_{(\R^d)^{k+1}} f(\norm{r\theta+\by}) T_k(0,\by) \\
&\times \prod_{i=1}^{k+1} \ind\set{\norm{r\theta+y_i} > R_n} d\by d\theta dr,
\end{align*}
where $J(\theta) = \abs{\frac{\partial x}{\partial \theta}}$, and $f(x) = f(\norm{x})$ by the spherical symmetry assumption.
Set
\[
    G_k(r,\theta) \triangleq \int_{(\R^d)^{k+1}} f(\norm{r\theta+\by}) T_k(0,\by) \prod_{i=1}^{k+1} \ind\set{\norm{r\theta+y_i} > R_n} d\by.
\]
Since $T_k$ is rotation invariant, it is easy to show that for every $\theta \in S^{d-1}$
\[
    G_k(r,\theta) = G_k(r,\eone) \triangleq G_k(r).
\]
Thus,
\begin{equation}\label{eq:crackle:I_k}
    I_k = s_{d-1}\int_{R_n}^\infty r^{d-1}f(r)G_k(r)dr.
\end{equation}
This completes the proof for $\Sk$. The proof for $\Skt$ is similar.


\end{proof}

In what follows, we shall use the following elementary limits:
\begin{enumerate}
\item For every $k > 0$,
\begin{equation} \label{eq:binom_limit}
    \limninf n^{-k} \binom{n}{k}  = \frac{1}{k!}
\end{equation}
\item For every sequence $a_n\to 0$ and $k\ge0$,
\begin{equation}\label{eq:power_exp}
    \limninf \frac{(1-a_n)^{n-k}}{e^{-na_n}} = 1
\end{equation}
\end{enumerate}


\subsection{Crackle - The Power Law Distribution}


In this section we prove the results in Section \ref{sec:crackle:pl}. First, we need a few lemmas.


\begin{lemma}\label{lem:crackle:pl_dust}
If $f=\fpl$, and $R_n\to\infty$, then
\[
\limninf \param{n R_n^{d-\alpha}}^{-1} \mean{\So} = \mplo,
\]
where $\mplo$ is defined in \eqref{eq:crackle:mplo}.

If, in addition, $nR_n^{-\alpha}\to 0$, then
\[
\limninf \param{ n R_n^{d-\alpha} }^{-1}\meanx{\Sot} = \mplo.
\]
\end{lemma}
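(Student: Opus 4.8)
The plan is to apply Lemma \ref{lem:crackle:symmetry_zero}, which already reduces both $\mean{\So}$ and $\meanx{\Sot}$ to one-dimensional radial integrals, and then extract the leading-order asymptotics as $R_n\to\infty$. For the first claim, I would start from
\[
\mean{\So} = s_{d-1}\, n \int_{R_n}^\infty r^{d-1} \fpl(r)\, dr
= s_{d-1}\,\cpl\, n \int_{R_n}^\infty \frac{r^{d-1}}{1+r^\alpha}\, dr,
\]
and observe that since $\alpha>d$ the integrand behaves like $r^{d-1-\alpha}$ for large $r$. The precise statement I want is that $\int_{R_n}^\infty \frac{r^{d-1}}{1+r^\alpha}\,dr \sim \frac{R_n^{d-\alpha}}{\alpha-d}$ as $R_n\to\infty$; this follows by writing $\frac{r^{d-1}}{1+r^\alpha} = r^{d-1-\alpha}\frac{1}{1+r^{-\alpha}}$ and using that $\frac{1}{1+r^{-\alpha}}\to 1$ uniformly on $[R_n,\infty)$, so the ratio of the integral to $\int_{R_n}^\infty r^{d-1-\alpha}\,dr = \frac{R_n^{d-\alpha}}{\alpha-d}$ tends to $1$ (dominated convergence after the substitution $r = R_n u$ makes this rigorous). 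Multiplying by $s_{d-1}\cpl n$ and dividing by $nR_n^{d-\alpha}$ yields exactly $\mplo = \frac{s_{d-1}\cpl}{\alpha-d}$.

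For the second claim I would start from the $\Sot$ formula of Lemma \ref{lem:crackle:symmetry_zero},
\[
\meanx{\Sot} = s_{d-1}\, n \int_{R_n}^\infty r^{d-1}\fpl(r)\,(1-np(r\eone))^{n-1}\,dr,
\]
and the goal is to show that the extra factor $(1-np(r\eone))^{n-1}$ does not change the leading asymptotics, i.e.\ that it tends to $1$ on the relevant range of $r$. The key is that $p(r\eone) = \int_{U(r\eone)}\fpl(z)\,dz$ is the mass of the density over a bounded neighbourhood of the point $r\eone$, so $p(r\eone) \le C \fpl(r') \asymp r^{-\alpha}$ for $r'$ near $r$ (by spherical symmetry and monotonicity of $\fpl$ away from the origin, the density over the radius-$2$ ball about $r\eone$ is controlled by its value at the nearest point $(r-2)\eone$). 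Hence $np(r\eone) = O(nr^{-\alpha})$, and on the range $r\ge R_n$ the hypothesis $nR_n^{-\alpha}\to 0$ forces $\sup_{r\ge R_n} np(r\eone)\to 0$. Using \eqref{eq:power_exp} together with $np(r\eone)\to 0$ gives $(1-np(r\eone))^{n-1}\to 1$, so the connection-correction factor is asymptotically negligible and the same radial estimate as before applies.

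The main obstacle is the uniform control of $np(r\eone)$ over the whole ray $r\in[R_n,\infty)$, since $n$ and $R_n$ tend to infinity simultaneously: one must confirm that the bound $np(r\eone)\lesssim nr^{-\alpha}\le nR_n^{-\alpha}\to 0$ really is uniform in $r$, and that the integral is not dominated by a far tail where the approximation degrades. I would handle this by sandwiching $\meanx{\Sot}$ between $\mean{\So}$ (taking the factor $\le 1$) and $\mean{\So}\cdot(1-\sup_{r\ge R_n} np(r\eone))^{n-1}$ from below; since both bounds share the same $\mplo nR_n^{d-\alpha}$ leading term once $\sup_{r\ge R_n}np(r\eone)\to 0$, the squeeze delivers the claimed limit. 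A secondary technical point worth stating carefully is the estimate $p(r\eone)\le g^{-1}_{\mathrm{loc}}\fpl(r-2) \asymp \fpl(r)$ relating the neighbourhood mass to the pointwise density, which uses the monotone decay of $\fpl$ and the bounded size of $U(r\eone)$; I would verify this once and reuse it.
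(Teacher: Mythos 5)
Your proof is correct and follows essentially the same route as the paper: both parts reduce to the radial integrals of Lemma \ref{lem:crackle:symmetry_zero}, rescale the radial variable, and apply dominated convergence, with the isolation factor shown to tend to $1$ because $np(r\eone)=O(nR_n^{-\alpha})\to 0$ for $r\ge R_n$ (the paper argues pointwise convergence plus dominated convergence where you use a uniform bound and a squeeze --- a cosmetic difference). One small notational point: the isolation probability is $(1-p(r\eone))^{n-1}$, not $(1-np(r\eone))^{n-1}$; the extra $n$ is a typo inherited from the displayed statement of Lemma \ref{lem:crackle:symmetry_zero}, and your subsequent analysis (the factor is $\approx e^{-np(r\eone)}\to 1$ once $np(r\eone)\to 0$) is exactly the right one for the correct factor.
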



\begin{proof}
From Lemma \ref{lem:crackle:symmetry_zero} we have that
\[
    \mean{\So} = s_{d-1}n \int_{R_n}^\infty r^{d-1}f(r)dr.
\]
Making the change of variables $r\to R_n \rho$ yields
\begin{align*}
    \mean{\So} &= s_{d-1}n  \int_1^\infty \frac{\cpl(R_n\rho)^{d-1}}{1+ (R_n\rho)^\alpha}R_n d\rho  \\
    &= s_{d-1}\cpl n R_n^{d-\alpha} \int_1^\infty \frac{\rho^{d-1}}{R_n^{-\alpha}+ \rho^\alpha} d\rho.
\end{align*}
Applying the dominated convergence theorem to the previous integral gives
\[
\limninf \param{nR_n^{d-\alpha}}^{-1}\mean{\So} = s_{d-1}\cpl\int_1^\infty \rho^{d-1-\alpha}d\rho = \frac{s_{d-1}\cpl}{\alpha-d} = \mplo.
\]
This proves the first part of the lemma.

Next, from Lemma \ref{lem:crackle:symmetry_zero} we have that
\[
    \meanx{\Sot} = s_{d-1}n \int_{R_n}^\infty r^{d-1}f(r)(1-p(r\eone))^{n-1}dr.
\]
The power term is bounded by $1$ and therefore will not affect the conditions needed for dominated convergence. Thus, using \eqref{eq:power_exp}, we only need to evaluate its limit.
\[
p(r\eone) = \int_{B_2(r\eone)}f(z)dz = \int_{B_2(0)}\frac{\cpl}{1+\norm{r\eone+z}}dz,
\]
and after the change of variables $r\to R_n\rho$ we have,
\[
p(R_n\rho\eone)  =\cpl R_n^{-\alpha}\int_{B_2(0)} \frac{1}{R_n^{-\alpha}+\norm{\rho\eone+R_n^{-1}z}^\alpha}dz.
\]
If $nR_n^{-\alpha}\to 0$, then, by dominated convergence, we have
\[
\limninf np(R_n\rho\eone) =0.
\]
Thus,
\[
        \limninf (1-p(R_n\rho\eone))^{n-1} = \limninf e^{-np(R_n\rho\eone)} =1,
\]
and therefore we have
\[
    \limninf\param{nR_n^{d-\alpha}}^{-1}\meanx{\Sot} = \limninf\param{nR_n^{d-\alpha}}^{-1}\mean{\So} = \mplo.
\]
This completes the proof of the second part of the lemma.
\end{proof}


\begin{lemma}\label{lem:crackle:pl_holes}
If $f=\fpl$, and $R_n\to\infty$ then
\[
\limninf \param{n^{k+2} R_n^{d- \alpha(k+2)}}^{-1} \mean{\Sk} = \mpl,
\]
where $\mpl$ is defined in \eqref{eq:crackle:mpl}.
If, in addition,$n R_n^{-\alpha} \to 0$, then
\[
\limninf \param{n^{k+2} R_n^{d-\alpha(k+2)}}^{-1} \meanx{\Skt} = \mpl.
\]
\end{lemma}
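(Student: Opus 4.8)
The plan is to follow the same route as the proof of Lemma \ref{lem:crackle:pl_dust}, now starting from the closed forms for $\mean{\Sk}$ and $\meanx{\Skt}$ supplied by Lemma \ref{lem:crackle:symmetry}, rescaling the radial variable, and passing to the limit by dominated convergence. Concretely, I would begin with
\[
\mean{\Sk} = s_{d-1}\binom{n}{k+2}\int_{R_n}^\infty r^{d-1}f(r)G_k(r)\,dr,
\]
substitute $r = R_n\rho$, and expand $G_k(R_n\rho)$ so that the whole quantity becomes a single double integral over $(\rho,\by)\in[1,\infty)\times(\R^d)^{k+1}$. Since the outer factor $f(r)$ together with the $k+1$ density factors hidden inside $G_k$ supply a product of exactly $k+2$ copies of $\fpl$, each of order $R_n^{-\alpha}$, the Jacobian bookkeeping ($r^{d-1}dr = R_n^{d}\rho^{d-1}d\rho$, and \eqref{eq:binom_limit} for the binomial) produces the global prefactor $n^{k+2}R_n^{d-\alpha(k+2)}$, leaving a rescaled integrand whose limit I will show integrates to $\frac{s_{d-1}\cpl^{k+2}}{(k+2)!(\alpha(k+2)-d)}\int T_k(0,\by)\,d\by$, i.e.\ $\mpl$ as in \eqref{eq:crackle:mpl}.

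The pointwise limit is straightforward once one notes the decisive structural fact: $T_k(0,\by)=1$ only when $\set{0,y_1,\dots,y_{k+1}}$ spans a minimal $k$-cycle of $\CC(\cdot,1)$, which forces all $y_i$ to lie in a fixed bounded set $K\subset(\R^d)^{k+1}$. Hence the $\by$-integration is effectively over a compact region, so $\int T_k(0,\by)\,d\by<\infty$, and for $\by\in K$ and $\rho>1$ one has $\norm{R_n\rho\eone+y_i}/R_n\to\rho$, whence each indicator $\ind\set{\norm{R_n\rho\eone+y_i}>R_n}\to 1$ and $R_n^{\alpha}\fpl(\norm{R_n\rho\eone+y_i})\to\cpl\rho^{-\alpha}$; the same holds for the leading factor $R_n^{\alpha}\fpl(R_n\rho)$. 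Multiplying the $k+2$ factors against the radial weight $\rho^{d-1}$ gives the pointwise integrand $\cpl^{k+2}\rho^{d-1-\alpha(k+2)}T_k(0,\by)$, whose $\rho$-integral over $[1,\infty)$ converges because $\alpha>d$ guarantees $d-1-\alpha(k+2)<-1$, producing the factor $(\alpha(k+2)-d)^{-1}$.

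The step I expect to be the crux is producing an $n$-independent integrable dominating function, since the indicators and the arguments of $\fpl$ all depend on $R_n$. The key is to exploit the surviving constraint itself: on the support of $T_k$ the vectors satisfy $\norm{y_i}\le C$ for a constant $C=C(k,d)$, so for $R_n\ge 2C$ and every $\rho\ge 1$ one has $\norm{R_n\rho\eone+y_i}\ge R_n\rho-C\ge R_n\rho/2$, whence $\fpl(\norm{R_n\rho\eone+y_i})\le \cpl 2^{\alpha}R_n^{-\alpha}\rho^{-\alpha}$, and likewise $\fpl(R_n\rho)\le\cpl R_n^{-\alpha}\rho^{-\alpha}$. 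Feeding these bounds into the rescaled integrand cancels all powers of $R_n$ and bounds it, uniformly in $n$, by a constant multiple of $\rho^{d-1-\alpha(k+2)}T_k(0,\by)$, which is integrable on $[1,\infty)\times K$ by the two facts just noted. Dominated convergence then delivers the first claim.

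For $\meanx{\Skt}$ the only new ingredient is the extra factor $(1-p(r\eone,r\eone+\by))^{n-k-2}$ appearing in $\hat{G}_k$. Being bounded by $1$, it leaves the dominating function above unchanged, so only its pointwise limit must be computed. Exactly as in Lemma \ref{lem:crackle:pl_dust}, I would bound $p(r\eone,r\eone+\by)$ by the $f$-mass of a ball of fixed radius about $r\eone$; on the support of $T_k$ this gives $p\le \mathrm{const}\cdot R_n^{-\alpha}\rho^{-\alpha}$, so under the additional hypothesis $nR_n^{-\alpha}\to 0$ we obtain $np(r\eone,r\eone+\by)\to 0$ and hence, by \eqref{eq:power_exp}, $(1-p)^{n-k-2}\to 1$. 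The limiting integrand is therefore identical to the one above, and dominated convergence yields $\limninf\param{n^{k+2}R_n^{d-\alpha(k+2)}}^{-1}\meanx{\Skt}=\mpl$, completing the plan.
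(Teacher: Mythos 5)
Your proposal is correct and follows essentially the same route as the paper's proof: start from Lemma \ref{lem:crackle:symmetry}, substitute $r\to R_n\rho$, extract the prefactor $n^{k+2}R_n^{d-\alpha(k+2)}$ via \eqref{eq:binom_limit}, and pass to the limit by dominated convergence, with the $(1-p)^{n-k-2}$ factor handled exactly as in Lemma \ref{lem:crackle:pl_dust}. Your explicit dominating function, built from the compactness of the support of $T_k$, fills in a step the paper dismisses as ``easy to show,'' but this is a refinement of the same argument rather than a different one.
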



\begin{proof}
The proof is in the spirit of the proof of Lemma \ref{lem:crackle:pl_dust}, but technically more complicated.
From Lemma \ref{lem:crackle:symmetry} we have that
\[
\mean{\Sk} = \binom{n}{k+2} I_k,
\]
where
\[
    I_k = s_{d-1}\int_{R_n}^\infty r^{d-1}f(r)G_k(r)dr.
\]
Making the change of variables $r \to R_n\rho$ yields
\begin{align*}
I_k &= s_{d-1}R_n\int_1^\infty (R_n\rho)^{d-1} f(R_n\rho)  G_k(R_n\rho) d\rho \\
&= s_{d-1}\cpl^{k+2}(R_n)^{d-\alpha(k+2)}\int_1^\infty \int_{(\R^d)^{k+1}} \frac{\rho^{d-1}}{R_n^{-\alpha} + \rho^\alpha}\prod_{i=1}^{k+1}\frac{1}{R_n^{-\alpha} +\norm{\rho\eone + R_n^{-1}y_i}^\alpha} \\
&\ \ \times T_k(0,\by) \prod_{i=1}^{k+1} \ind\set{\norm{\rho\eone+ R_n^{-1}y_i} > 1} d\by.
\end{align*}
Thus, using \eqref{eq:binom_limit},
\begin{align*}
&(n^{k+2} R_n^{d-\alpha (k+2)})^{-1}\mean{\Sk} = \frac{s_{d-1}\cpl^{k+2}}{(k+2)!}\int_1^\infty \int_{(\R^d)^{k+1}} \frac{\rho^{d-1}}{R_n^{-\alpha} + \rho^\alpha}\\
&\quad \times T_k(0,\by) \prod_{i=1}^{k+1} \frac{1}{R_n^{-\alpha} +\norm{\rho\eone + R_n^{-1}y_i}^\alpha}\ind\set{\norm{\rho\eone+ R_n^{-1}y_i} > 1} d\by.
\end{align*}
It is easy to show that the integrand is bounded by an integrable term, so the dominated convergence theorem applies, yielding
\begin{align*}
&\limninf (n^{k+2} R_n^{d-\alpha (k+2)})^{-1}\mean{\Sk}\\
 & \qquad \qquad= \frac{s_{d-1}\cpl^{k+2}}{(k+2)!}\int_1^{\infty}\rho^{d-1-\alpha(k+2)}d\rho\int_{(\R^d)^{k+1}} T_k(0,\by)d\by \\
 & \qquad \qquad= \frac{s_{d-1}\cpl^{k+2}}{(\alpha(k+2)-d)(k+2)!}\int_{(\R^d)^{k+1}} T_k(0,\by)d\by\\
 & \qquad \qquad=\mpl.
\end{align*}
This proves the first part of the lemma.

Next, the terms $G_k(r)$ and $\hat{G}_k(r)$ in Lemma \ref{lem:crackle:symmetry} differ only by the term $(1-p(r\eone, r\eone+\by))^{n-k-2}$, so dominated convergence still applies.
Now,
\[
p(r\eone, r\eone+\by) = \int_{U(r\eone,r\eone+\by)} f(z)dz = \int_{U(0,\by)}f(r\eone+z)dz,
\]
and substituting $r\to R_n\rho$ yields,
\[
p(R_n\rho\eone , R_n\rho\eone+\by) = \cpl R_n^{-\alpha}\int_{U(0,\by)} \frac{1}{R_n^{-\alpha}+\norm{\rho\eone+R_n^{-1}z}^\alpha}dz.
\]
If $nR_n^{-\alpha}\to 0$, then using the dominated convergence we have
\[
\limninf np(R_n\rho\eone,R_n\rho\eone + \by) = 0.
\]
Thus,
\[
    \limninf e^{-np(R_n\rho\eone, R_n\rho\eone + \by)} =1,
\]
and therefore, using \eqref{eq:power_exp},
\begin{align*}
\limninf \param{n^{k+2} R_n^{d-\alpha (k+2)}}^{-1}\meanx{\Skt}  &= \limninf \param{n^{k+2} R_n^{d-\alpha (k+2)}}^{-1}\meanx{\Sk}  \\
&= \mpl.
\end{align*}
This completes the proof of the second part of the lemma.
\end{proof}


\begin{lemma}\label{lem:crackle:pl_other}
If $f=\fpl$, and $R_n\to\infty$ then
\[
\limninf \param{n^{k+3} R_n^{d-\alpha(k+3)}}^{-1} \mean{\Lk} = \mplh,
\]
for some $\mplh > 0$.
\end{lemma}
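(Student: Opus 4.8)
The plan is to mimic the proof of Lemma~\ref{lem:crackle:pl_holes} almost verbatim, replacing the minimal-cycle indicator $T_k$ (on $(k+2)$-point configurations) by a connectedness indicator (on $(k+3)$-point configurations). Since $\Lk$ is a sum of Bernoulli variables indexed by the $(k+3)$-subsets of $\cxr$, writing $H_k(\bx)\teq\ind\set{\CC(\bx,1)\text{ is connected}}$ for $\bx\in(\Rd)^{k+3}$ gives
\[
\mean{\Lk}=\binom{n}{k+3}\int_{(\Rd)^{k+3}}\prod_{i=1}^{k+3}f(x_i)\,H_k(\bx)\prod_{i=1}^{k+3}\ind\set{\norm{x_i}>R_n}\,d\bx.
\]
I would then run the symmetry reduction of Lemma~\ref{lem:crackle:symmetry}: apply the change of variables $x_1\to x$, $x_i\to x+y_{i-1}$ $(i>1)$, use that $H_k$ is translation and rotation invariant, and pass to polar coordinates in $x$; by the spherical symmetry of $\fpl$ the angular integral contributes a factor $s_{d-1}$, yielding $\mean{\Lk}=s_{d-1}\binom{n}{k+3}\int_{R_n}^\infty r^{d-1}f(r)\tilde G_k(r)\,dr$ with $\tilde G_k(r)\teq\int_{(\Rd)^{k+2}}\prod_{i=1}^{k+2}f(\norm{r\eone+y_i})\,H_k(0,\by)\prod_{i=1}^{k+2}\ind\set{\norm{r\eone+y_i}>R_n}\,d\by$.

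Next I would substitute $r\to R_n\rho$ and factor the power of $R_n$ out of each copy of $\fpl$, exactly as in Lemma~\ref{lem:crackle:pl_holes}; together with \eqref{eq:binom_limit} this turns the normalized mean into
\[
\frac{s_{d-1}\cpl^{k+3}}{(k+3)!}\int_1^\infty\int_{(\Rd)^{k+2}}\frac{\rho^{d-1}}{R_n^{-\alpha}+\rho^\alpha}\,H_k(0,\by)\prod_{i=1}^{k+2}\frac{\ind\set{\norm{\rho\eone+R_n^{-1}y_i}>1}}{R_n^{-\alpha}+\norm{\rho\eone+R_n^{-1}y_i}^\alpha}\,d\by\,d\rho,
\]
whose integrand converges pointwise to $\rho^{d-1-\alpha(k+3)}H_k(0,\by)\ind\set{\rho>1}$. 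Dominated convergence then gives
\[
\mplh=\frac{s_{d-1}\cpl^{k+3}}{(\alpha(k+3)-d)(k+3)!}\int_{(\Rd)^{k+2}}H_k(0,\by)\,d\by,
\]
the $\rho$-integral converging because $\alpha(k+3)>\alpha(k+2)>d$, which also accounts for the exponent $R_n^{d-\alpha(k+3)}$ in the normalization.

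It remains to check the facts that make $\mplh$ a well-defined positive constant and that justify the limit. First, $\int H_k(0,\by)\,d\by$ is finite: connectedness of $\CC(\set{0,\by},1)$ forces its $1$-skeleton to be connected, so every vertex is joined to $0$ by a path of at most $k+2$ edges, each of length $\le 2$; hence $\norm{y_i}\le 2(k+2)$ for all $i$ and $H_k$ vanishes off the bounded box $B_{2(k+2)}(0)^{k+2}$. It is also positive, since every $(k+3)$-tuple contained in a ball of radius $\tfrac12$ yields a connected complex and these form a set of positive Lebesgue measure; thus $\mplh>0$. Second, for the dominating bound: on $\set{\rho>1}$ one has $\rho^{d-1}/(R_n^{-\alpha}+\rho^\alpha)\le\rho^{d-1-\alpha}$, each factor in the product is $\le 1$ on the support of the indicators, and for $\rho\ge 4(k+2)$ the support constraint $\norm{y_i}\le 2(k+2)$ gives $\norm{\rho\eone+R_n^{-1}y_i}\ge\rho/2$, so each factor is $\le(\rho/2)^{-\alpha}$; splitting the $\rho$-range at $4(k+2)$ produces an integrable dominating function. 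The only genuinely technical point is this last bookkeeping; everything else is a mechanical transcription of Lemma~\ref{lem:crackle:pl_holes}, and the single new observation driving both the finiteness of $\int H_k$ and the domination is that connectedness forces a bounded diameter.
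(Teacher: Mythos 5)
Your proposal is correct and follows exactly the route the paper intends: its proof of this lemma is a one-line remark saying to repeat the argument of Lemma~\ref{lem:crackle:pl_holes} with $T_k$ replaced by a connectedness indicator on $(k+3)$-point configurations, which is precisely what you carry out (and you additionally supply the finiteness, positivity, and domination checks that the paper leaves implicit).
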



\begin{proof}
The proof is very similar to the proof of Lemma \ref{lem:crackle:pl_holes}. We need only replace $T_k$ with an indicator function that tests whether a  sub-complex generated by $k+3$ points is connected. The exact value of $\mplh$ will not be needed anywhere.
\end{proof}


We can now prove Theorem \ref{thm:crackle:mean_bk_pl}.


\begin{proof}[Proof of Theorem \ref{thm:crackle:mean_bk_pl}]
To prove the limit for $\beta_{0,n}$ simply combine Lemma \ref{lem:crackle:pl_dust} with the inequality \eqref{eq:crackle:betti_0_ineq}.
To prove the limit for $\bk$, $k\ge 1 $,  combine Lemmas \ref{lem:crackle:pl_holes} and \ref{lem:crackle:pl_other} with the inequality  \eqref{eq:crackle:betti_k_ineq}.
\end{proof}


%


\subsection{Crackle - The Exponential Distribution}


In this section we wish to prove Theorem \ref{thm:crackle:mean_bk_exp}. We start with the following lemmas.


\begin{lemma}\label{lem:crackle:exp_dust}
If $f=\fexp$, and $R_n\to\infty$ then,
\[
\limninf \param{n R_n^{d-1} e^{-R_n}}^{-1} \mean{\So} = \mexpo,
\]
where $\mexpo$ is defined in \eqref{eq:crackle:mexpo}.

If, in addition, $ne^{-R_n}\to 0$ then,
\[
\limninf \param{n R_n^{d-1} e^{-R_n}}^{-1} \meanx{\Sot} = \mexpo.
\]
\end{lemma}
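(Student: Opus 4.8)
The plan is to follow the structure of the power-law computation in Lemma~\ref{lem:crackle:pl_dust}, with the one essential change that the exponential tail calls for an \emph{additive} rescaling $r = R_n + t$ rather than the multiplicative rescaling $r \to R_n\rho$ used there. Starting from Lemma~\ref{lem:crackle:symmetry_zero}, which gives $\mean{\So} = s_{d-1} n \int_{R_n}^\infty r^{d-1} \cexp e^{-r}\,dr$, I substitute $r = R_n + t$ and factor $e^{-R_n}$ out of the exponential and $R_n^{d-1}$ out of the polynomial prefactor. This produces
\[
\param{n R_n^{d-1} e^{-R_n}}^{-1}\mean{\So} = s_{d-1}\cexp \int_0^\infty \param{1 + t/R_n}^{d-1} e^{-t}\,dt.
\]
Since $R_n \to \infty$, the integrand converges pointwise to $e^{-t}$ and, for $R_n \ge 1$, is dominated by the integrable function $(1+t)^{d-1} e^{-t}$; dominated convergence then yields the limit $s_{d-1}\cexp \int_0^\infty e^{-t}\,dt = s_{d-1}\cexp = \mexpo$, proving the first assertion.

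For the second assertion I begin from the companion formula in Lemma~\ref{lem:crackle:symmetry_zero}, in which the integrand carries the extra connectivity factor $(1 - p(r\eone))^{n-1}$. After the same substitution $r = R_n + t$ this factor is bounded by $1$, so it does not disturb the dominating function, and by \eqref{eq:power_exp} it suffices to show that $n\,p((R_n + t)\eone) \to 0$ for each fixed $t$. The key estimate is a tail bound on $p$: since every $z \in B_2(r\eone)$ satisfies $\norm{z} \ge r - 2$, the exponential density gives $p(r\eone) = \cexp \int_{B_2(r\eone)} e^{-\norm{z}}\,dz \le \cexp e^{2}\,\vol(B_2(0))\,e^{-r}$, hence $p((R_n+t)\eone) \le C e^{-R_n}$ for a constant $C$ independent of $t$. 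Under the hypothesis $n e^{-R_n} \to 0$ this yields $n\,p((R_n+t)\eone) \le C n e^{-R_n} \to 0$ uniformly in $t$, so that $(1 - p((R_n+t)\eone))^{n-1} \to 1$.

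Applying dominated convergence once more, again with the dominating function $(1+t)^{d-1}e^{-t}$, I conclude that $\param{n R_n^{d-1} e^{-R_n}}^{-1}\meanx{\Sot}$ has the same limit $\mexpo$ as $\param{n R_n^{d-1} e^{-R_n}}^{-1}\mean{\So}$, which completes the proof.

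I expect the only genuinely delicate point to be the treatment of the connectivity factor: one must verify both that the tail bound $p(r\eone) \le C e^{-r}$ holds with a constant uniform over the relevant range $r \ge R_n$, and that the resulting estimate $n\,p \to 0$ is strong enough to push the factor to $1$ inside the limit. The multiplicative-to-additive change of variables is the structural novelty relative to Lemma~\ref{lem:crackle:pl_dust}, but it is routine; everything else reduces to two applications of dominated convergence sharing the same integrable dominating function.
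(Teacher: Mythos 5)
Your proof is correct and follows essentially the same route as the paper's: the additive substitution $r = R_n + t$, dominated convergence with the dominating function $(1+t)^{d-1}e^{-t}$, and the tail bound $p((R_n+t)\eone) \le \cexp e^{2}\vol(B_2(0))e^{-(R_n+t)}$ combined with \eqref{eq:power_exp} to dispose of the connectivity factor. Your uniform-in-$t$ bound on $n\,p$ is, if anything, slightly more explicit than the paper's pointwise treatment.
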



\begin{proof}
From Lemma \ref{lem:crackle:symmetry_zero} we have that
\[
    \mean{\So} = s_{d-1}n \int_{R_n}^\infty r^{d-1}f(r)dr.
\]
Using the change of variables $r\to \rho + R_n$ yields
\begin{align*}
    \mean{\So} &= s_{d-1}n  \int_0^\infty (\rho+R_n)^{d-1}\cexp e^{-(\rho+R_n)} d\rho  \\
    &= s_{d-1}\cexp n R_n^{d-1}e^{-R_n} \int_0^\infty \param{\frac{\rho}{R_n}+1}^{d-1}e^{-\rho} d\rho.
\end{align*}
Applying dominated convergence to the last integral yields,
\[
\limninf \param{n R_n^{d-1} e^{-R_n}}^{-1} \mean{\So} = s_{d-1}\cexp\int_0^\infty e^{-\rho}d\rho = s_{d-1}\cexp = \mexpo.
\]
This proves the first part of the lemma.

Next, from Lemma \ref{lem:crackle:symmetry_zero} we have that
\[
    \meanx{\Sot} = s_{d-1}n \int_{R_n}^\infty r^{d-1}f(r)(1-p(r\eone))^{n-1}dr.
\]
The power term will not affect the dominated convergence conditions. Thus, we only need to evaluate its limit.
\[
p(r\eone) = \int_{B_2(r\eone)}f(z)dz = \int_{B_2(0)}\cexp e^{-\norm{r\eone+z}}dz,
\]
and after the change of variables $r\to \rho+R_n$ we have,
\[
p((\rho+R_n)\eone) = \int_{B_2(0)} \cexp e^{-\norm{(\rho+R_n)\eone + z}}dz \le e^{-(R_n+\rho)} \int_{B_2(0)} \cexp e^{\norm{z}}dz.
\]
If $ne^{-R_n}\to 0$, then
\[
\limninf np((\rho+R_n)\eone) =0.
\]
Thus,
\[
    \limninf e^{-np((\rho+R_n)\eone)} =1,
\]
and therefore, using \eqref{eq:power_exp}, we have
\[
   \limninf \param{n R_n^{d-1} e^{-R_n}}^{-1} \meanx{\Sot} = \limninf \param{n R_n^{d-1} e^{-R_n}}^{-1} \meanx{\So} = \mexpo.
\]
This completes the proof of the second part of the lemma.
\end{proof}


\begin{lemma}\label{lem:crackle:exp_holes}
If $f=\fexp$, and $R_n\to\infty$ then,
\[
\limninf \param{n^{k+2} R_n^{d-1} e^{-(k+2)R_n}}^{-1} \mean{\Sk} = \mexp,
\]
where $\mexp$ is defined in \eqref{eq:crackle:mexp}.

If, in addition, $ne^{-R_n}\to 0$ then,
\[
\limninf \param{n^{k+2} R_n^{d-1} e^{-(k+2)R_n}}^{-1} \meanx{\Skt} = \mexp.
\]
\end{lemma}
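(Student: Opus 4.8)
The plan is to follow exactly the strategy already established for Lemma~\ref{lem:crackle:pl_holes}, but with the exponential density $\fexp$ replacing $\fpl$, so that the relevant scaling becomes $n^{k+2}R_n^{d-1}e^{-(k+2)R_n}$ rather than $n^{k+2}R_n^{d-\alpha(k+2)}$. First I would invoke Lemma~\ref{lem:crackle:symmetry} to write $\mean{\Sk} = \binom{n}{k+2} I_k$ with $I_k = s_{d-1}\int_{R_n}^\infty r^{d-1}f(r)G_k(r)\,dr$, where $G_k(r)$ is the integral of $f(\norm{r\eone+\by})T_k(0,\by)\prod_i\ind\set{\norm{r\eone+y_i}>R_n}$ over $(\R^d)^{k+1}$. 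The key change of variables is now the \emph{additive} shift $r\to \rho+R_n$ (as in Lemma~\ref{lem:crackle:exp_dust}), rather than the multiplicative one used in the power-law case, because for the exponential tail it is the difference $r-R_n$ that controls the decay.

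The main computation is to understand the asymptotics of $G_k(\rho+R_n)$ as $n\to\infty$. Writing out the product of densities, each factor $f(\norm{(\rho+R_n)\eone+y_i})=\cexp e^{-\norm{(\rho+R_n)\eone+y_i}}$ contributes an exponential, and the central factor $f(\norm{(\rho+R_n)\eone+\by})$ adds one more. As $R_n\to\infty$, the unit vectors $(\rho+R_n)\eone+y_i$ become nearly parallel to $\eone$, so that $\norm{(\rho+R_n)\eone+y_i}\to (\rho+R_n)+y_i^1$ where $y_i^1$ is the first coordinate of $y_i$; this is precisely the linearization that produces the factor $e^{-\sum_{i=1}^{k+1}y_i^1}$ appearing in the definition~\eqref{eq:crackle:mexp} of $\mexp$. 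Simultaneously the constraint $\norm{(\rho+R_n)\eone+y_i}>R_n$ collapses in the limit to $\set{y_i^1>-\rho}$, which supplies the indicators $\ind\set{y_i^1>-\rho}$ in~\eqref{eq:crackle:mexp}. After extracting the overall $(k+2)$-fold exponential decay $e^{-(k+2)R_n}$ and the polynomial factor $R_n^{d-1}$ from the surface-area term $r^{d-1}=(\rho+R_n)^{d-1}\sim R_n^{d-1}$, I would apply \eqref{eq:binom_limit} to the binomial coefficient and the dominated convergence theorem to pass the limit inside, obtaining
\[
\limninf\param{n^{k+2}R_n^{d-1}e^{-(k+2)R_n}}^{-1}\mean{\Sk}
=\frac{s_{d-1}\cexp^{k+2}}{(k+2)!}\int_0^\infty\!\!\int_{(\R^d)^{k+1}} T_k(0,\by)\,e^{-\param{(k+2)\rho+\sum_{i=1}^{k+1}y_i^1}}\prod_{i=1}^{k+1}\ind\set{y_i^1>-\rho}\,d\by\,d\rho=\mexp.
\]

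The hard part will be justifying the uniform-in-$n$ domination needed to apply dominated convergence to the linearized integrand, since the exponents $\norm{(\rho+R_n)\eone+y_i}$ depend on $n$ through $R_n$ and the support of $\by$ is all of $(\R^d)^{k+1}$. The compact support of $T_k(0,\by)$ (a minimal $k$-cycle forces all $k+2$ points to lie within bounded mutual distance) confines $\by$ to a bounded region, which tames the transverse coordinates; the first-coordinate decay $e^{-y_i^1}$ together with the constraint $y_i^1>-\rho$ controls the tails in the $\eone$-direction and in $\rho$. One must check that the pointwise bound $\norm{(\rho+R_n)\eone+y_i}\ge (\rho+R_n)+y_i^1 - C/R_n$ (from a second-order Taylor expansion of the norm) yields an $n$-independent integrable majorant; this is routine but is the step requiring genuine care. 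For the second assertion, concerning $\meanx{\Skt}$, I would argue exactly as in Lemma~\ref{lem:crackle:pl_holes}: the terms $G_k$ and $\hat G_k$ differ only by the bounded factor $(1-p(r\eone,r\eone+\by))^{n-k-2}$, so dominated convergence still applies, and it suffices to show $np((\rho+R_n)\eone,(\rho+R_n)\eone+\by)\to 0$ whenever $ne^{-R_n}\to 0$ (using the same tail estimate $p\le e^{-R_n}\int_{U(0,\by)}\cexp e^{\norm{z}}dz$ as in Lemma~\ref{lem:crackle:exp_dust}), whence $e^{-np}\to 1$ by~\eqref{eq:power_exp} and the two limits coincide.
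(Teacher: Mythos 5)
Your proposal is correct and follows essentially the same route as the paper's proof: Lemma~\ref{lem:crackle:symmetry}, the additive substitution $r\to\rho+R_n$, linearization of $\norm{(\rho+R_n)\eone+y_i}$ to $(\rho+R_n)+y_i^1$ with the indicator collapsing to $\set{y_i^1>-\rho}$, dominated convergence, and the same $p\le e^{-R_n}e^{-\rho}\int_{U(0,\by)}\cexp e^{\norm{z}}dz$ estimate for the $\Skt$ part. Your extra remarks on the integrable majorant (compact support of $T_k$ plus the lower bound $\norm{(\rho+R_n)\eone+y_i}\ge(\rho+R_n)+y_i^1$ for large $n$) supply detail the paper leaves to the reader.
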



\begin{proof}
From Lemma \ref{lem:crackle:symmetry} we have that
\[
\mean{\Sk} = \frac{n^{k+2}}{(k+2)!} I_k,
\]
where
\[
    I_k = s_{d-1}\int_{R_n}^\infty r^{d-1}f(r)G_k(r)dr.
\]
Making the change of variables $r \to \rho + R_n$ yields
\begin{align*}
    I_k &= s_{d-1}\int_0^\infty (\rho+R_n)^{d-1}f(\rho+R_n)G_k(\rho+R_n)d\rho \\
    &= s_{d-1}\cexp^{k+2} \int_0^\infty \int_{(\R^d)^{k+1}} (\rho+R_n)^{d-1} e^{-(\rho+R_n)} \prod_{i=1}^{k+1} e^{-\norm{(\rho+R_n)\eone + y_i}} \\
    &\quad\times T_k(0,\by) \prod_{i=1}^{k+1} \ind\set{\norm{(\rho+R_n)\eone+y_i} > R_n} d\by d\rho \\
    &= s_{d-1}\cexp^{k+2}e^{-(k+2)R_n}R_n^{d-1} \int_0^\infty \int_{(\R^d)^{k+1}} \param{\frac{\rho}{R_n}+1}^{d-1} e^{-\rho} \\
    &\quad\times T_k(0,\by) \prod_{i=1}^{k+1} e^{-\norm{(\rho+R_n)\eone + y_i}} e^{R_n} \ind\set{\norm{(\rho+R_n)\eone+y_i} > R_n} d\by d\rho.
\end{align*}
The last integral can be easily shown to satisfy the  conditions of the dominated convergence theorem.
In addition, it is easy to show that
\[
    \limninf  e^{-\norm{(\rho+R_n)\eone +y_i}}e^{R_n}  = e^{-\param{\rho +\iprod{\eone,y_i}}} = e^{-(\rho + y_i^1)},
\]
where $y_i^1$ is the first coordinate of $y_i \in \R^d$, and also that
\[
    \limninf\ind\set{\norm{(\rho+R_n)\eone+y_i} > R_n} = \ind\set{y_i^1 \ge -\rho}.
\]
Altogether, we have that
\begin{align*}
    &\limninf \param{n^{k+2} R_n^{{d-1}} e^{-(k+2)R_n}}^{-1}  \mean{\Sk} \\
    &= \frac{s_{d-1}\cexp^{k+2}}{(k+2)!} \int_0^\infty \int_{(\R^d)^{k+1}} T_k(0,\by) e^{-\param{(k+2)\rho + \sum_{i=1}^{k+1}y_i^1}} \prod_{i=1}^{k+1} \ind\set{y_i^1 \ge -\rho} d\by d\rho,
\end{align*}
proving the first part of the lemma.

Next, as in the proof of Lemma \ref{lem:crackle:pl_holes}, we need to evaluate the term $p(r\eone, r\eone+\by)$.
\[
p(r\eone, r\eone+\by) = \int_{U(0,\by)}\cexp e^{-\norm{r\eone+z}}dz \le  \int_{U(0,\by)}\cexp e^{-(r-\norm{z})}dz.
\]
The change of variables $r\to\rho+R_n$ yields
\[
p((\rho+R_n)\eone,(\rho+R_n)\eone+\by) \le e^{-R_n}e^{-\rho} \int_{U(0,\by)}\cexp e^{\norm{z}}dz.
\]
If $ne^{-R_n}\to 0$, then
\[
\limninf n p((\rho+R_n)\eone,(\rho+R_n)\eone+\by) = 0.
\]
Thus,
\[
    \limninf e^{-np(R_n\rho\eone, R_n\rho\eone + \by)} =1,
\]
and therefore,
\begin{align*}
&\limninf \param{n^{k+2} R_n^{{d-1}} e^{-(k+2)R_n}}^{-1}  \meanx{\Skt} \\
&\qquad \qquad = \limninf \param{n^{k+2} R_n^{{d-1}} e^{-(k+2)R_n}}^{-1}  \mean{\Sk}= \mexp.
\end{align*}
This completes the proof.
\end{proof}


\begin{lemma}\label{lem:exp_other}
If $f=\fexp$, and $R_n\to\infty$ then
\[
\limninf \param{n^{k+3} R_n^{d-1} e^{-(k+3)R_n}}^{-1} \mean{\Lk} = \mexph.
\]
where $\mexph > 0$.
\end{lemma}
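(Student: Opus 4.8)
The plan is to follow the proof of Lemma \ref{lem:crackle:exp_holes} essentially verbatim, the only structural change being that the minimal-cycle indicator $T_k$ (on $k+2$ points) is replaced by the connectivity indicator
\[
h_k(\cY) \teq \ind\set{\abs{\cY}=k+3,\ \CC(\cY,1)\ \textrm{is connected}}
\]
on $k+3$ points, exactly as was done for the power-law analogue in Lemma \ref{lem:crackle:pl_other}. The crucial structural feature that makes this transplant legitimate is that connectivity of a \cech complex is invariant under rigid motions of $\R^d$; since $T_k$ enters the proof of Lemma \ref{lem:crackle:symmetry} only through its rotation and translation invariance, the entire symmetry reduction carries over with $T_k$ replaced by $h_k$.

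Concretely, I would first establish the analogue of Lemma \ref{lem:crackle:symmetry}. Writing $\Lk$ as a sum of Bernoulli indicators over the $(k+3)$-subsets of $\cxr$ gives
\[
\mean{\Lk} = \binom{n}{k+3}\int_{(\R^d)^{k+3}} f(\bx)\,h_k(\bx)\prod_{i=1}^{k+3}\ind\set{\norm{x_i}>R_n}\,d\bx.
\]
Applying the change of variables $x_1\to x$, $x_i\to x+y_{i-1}$ for $i>1$, using translation invariance to replace $h_k(x,x+\by)$ by $h_k(0,\by)$, and then passing to polar coordinates (where rotation invariance collapses the angular integral to the factor $s_{d-1}$) yields
\[
\mean{\Lk} = s_{d-1}\binom{n}{k+3}\int_{R_n}^\infty r^{d-1} f(r)\,H_k(r)\,dr,
\]
where
\[
H_k(r)\teq \int_{(\R^d)^{k+2}} f(\norm{r\eone+\by})\,h_k(0,\by)\prod_{i=1}^{k+2}\ind\set{\norm{r\eone+y_i}>R_n}\,d\by.
\]

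Next I would repeat the exponential computation. Substituting $r\to\rho+R_n$ with $f=\fexp$, the factor $f(\rho+R_n)$ contributes one $e^{-R_n}$ while the $k+2$ product terms $e^{-\norm{(\rho+R_n)\eone+y_i}}$ contribute the remaining $e^{-(k+2)R_n}$, and $(\rho+R_n)^{d-1}\sim R_n^{d-1}$, $\binom{n}{k+3}\sim n^{k+3}/(k+3)!$. Using the same pointwise limits $e^{-\norm{(\rho+R_n)\eone+y_i}}e^{R_n}\to e^{-(\rho+y_i^1)}$ and $\ind\set{\norm{(\rho+R_n)\eone+y_i}>R_n}\to\ind\set{y_i^1\ge-\rho}$ established in Lemma \ref{lem:crackle:exp_holes}, dominated convergence would give
\[
\mexph = \frac{s_{d-1}\cexp^{k+3}}{(k+3)!}\int_0^\infty\!\!\int_{(\R^d)^{k+2}} h_k(0,\by)\,e^{-\param{(k+3)\rho+\sum_{i=1}^{k+2}y_i^1}}\prod_{i=1}^{k+2}\ind\set{y_i^1\ge-\rho}\,d\by\,d\rho.
\]

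The one step needing genuine care is the domination in the $\by$-integral: for $\Sk$ this relied on the compact support of $T_k$, and here the same compactness is instead supplied by connectivity. If $\CC(0,\by,1)$ is connected then a spanning tree on its $k+3$ vertices has $k+2$ edges, each joining centres at distance at most $2$, so every vertex lies within distance $2(k+2)$ of the origin; hence $h_k(0,\by)=1$ confines $\by$ to a bounded region of $(\R^d)^{k+2}$. Combined with the integrable radial decay $e^{-(k+3)\rho}$, this produces an integrable dominating function and dominated convergence applies exactly as before. Finally, $\mexph>0$ because connected configurations of $k+3$ points (for instance, points mutually within distance $2$) occupy a set of positive Lebesgue measure on which the integrand is strictly positive. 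I expect no difficulty here beyond this bookkeeping, the diameter bound being the only point where the argument genuinely diverges from Lemma \ref{lem:crackle:exp_holes}.
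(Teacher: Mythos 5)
Your proposal is correct and follows exactly the route the paper takes: the paper's proof of Lemma \ref{lem:exp_other} is a one-line instruction to mimic the proof of Lemma \ref{lem:crackle:exp_holes} with $T_k$ replaced by the connectivity indicator on $k+3$ points, which is precisely what you carry out. The extra details you supply --- the spanning-tree diameter bound $2(k+2)$ that restores the compact support needed for dominated convergence, and the positive-measure argument for $\mexph>0$ --- are exactly the right points to check and are left implicit in the paper.
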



\begin{proof}
As for the proof of Lemma \ref{lem:crackle:pl_other}, mimic now  the proof of Lemma \ref{lem:crackle:exp_holes}, replacing $T_k$ with an indicator function that tests whether a sub-complex generated by $k+3$ points is connected.
\end{proof}


\begin{proof}[Proof of Theorem \ref{thm:crackle:mean_bk_exp}]
The proof follows the same steps as the proof of Theorem \ref{thm:crackle:mean_bk_pl}.
\end{proof}


\subsection{Crackle - The Gaussian Distribution}


In this section we  prove Theorem \ref{thm:crackle:mean_bk_norm}.


\begin{proof}[Proof of Theorem \ref{thm:crackle:mean_bk_norm}]
From Lemma \ref{lem:crackle:symmetry_zero} we have that
\[
    \mean{\So} = s_{d-1}n \int_{R_n}^\infty r^{d-1}f(r)dr.
\]
Making the change of variables $r \to (\rho^2 + R_n^2)^{1/2}$ which implies $dr = \frac{\rho}{(\rho^2+R_n^2)^{1/2}}d\rho$, we have
\begin{align*}
\mean{\So} &=  {s_{d-1}\cnorm n}e^{-R_n^2/2}\int_{0}^\infty (\rho^2+R_n^2)^{(d-2)/2}\rho e^{-\rho^2/2}d\rho \\
 &= s_{d-1}\cnorm n e^{-R_n^2/2}R_n^{d-2}\int_{0}^\infty \param{\param{{\rho}/{R_n}}^2+1}^{(d-2)/2}\rho e^{-\rho^2/2}d\rho.
\end{align*}
The integrand is bounded, and applying dominated convergence we have

\[
\limninf\param{{n e^{-R_n^2/2} R_n^{d-2}}}^{-1}\mean{\So}  = s_{d-1}\cnorm.
\]
Taking $R_n = \Roe \triangleq \sqrt{2 \log n + \param{{d-2}+\eps} \log \log n}$, we have
\[
e^{-R_n^2/2} = n^{-1}(\log n)^{-(d-2+\eps)/2}
\]
and so
\[
\limninf {n e^{-R_n^2/2} R_n^{d-2}} = 0
\]
which implies that
\[
    \mean{\So} \to 0.
\]

Finally, for every $0 \le k \le d-1$,
\[
\bk \le \So.
\]
Therefore,
\[
\limninf\mean{\bk} = 0,
\]
completing the proof.
\end{proof}


\bibliographystyle{plain}
\bibliography{refs}

\address{Robert J.\ Adler \\Electrical Engineering\\ Technion, Haifa,
Israel 32000 \\
\printead{e1}\\
\printead{u1}}

 \address{Omer Bobrowski \\Mathematics\\ Duke University,
120 Science Drive \\
Durham, NC 27708 \\
\printead{e2}\\
\printead{u2}}

\address{Shmuel Weinberger\\Mathematics \\ University of Chicago,
5734 S. University Ave\\
Chicago, IL 60637 \\
\printead{e5}\\
\printead{u5}}

\end{document}